\newtheorem{theorem}{Theorem}[section]
\newtheorem{corollary}[theorem]{Corollary}
\newtheorem{proposition}[theorem]{Proposition}
\newtheorem{lemma}[theorem]{Lemma}
\newtheorem*{theorem*}{Theorem}
\theoremstyle{definition}
\newtheorem{definition}[theorem]{Definition}
\newtheorem*{notation}{Notation}
\newtheorem{example}[theorem]{Example}
\newtheorem{remark}[theorem]{Remark}
\newcommand{\R}{{\mathbb R}}
\newcommand{\C}{{\mathbb C}}
\newcommand{\Z}{\mathbb Z}
\newcommand{\T}{{\mathbb T}}
\newcommand{\W}{\wedge}
\newcommand{\f}{\varphi}
\newcommand{\psip}{\psi_{\scriptscriptstyle +}}
\newcommand{\psim}{\psi_{\scriptscriptstyle -}}
\newcommand{\Wd}{w_2^{\scriptscriptstyle -}}
\newcommand{\sh}{{\sharp_h}}
\newcommand{\SU}{{\rm SU}}
\newcommand{\GL}{{\rm GL}}
\newcommand{\G}{{\rm G}}
\newcommand{\SL}{{\rm SL}}
\newcommand{\Gd}{{\rm G}_2}
\newcommand{\ddt}{\frac{\partial}{\partial t}}
\newcommand{\dt}{\frac{\mathrm d}{{\mathrm d}t}}
\newcommand{\Ric}{{\rm Ric}}
\newcommand{\Hess}{{\rm Hess}}
\newcommand{\grad}{{\rm grad}}
\newcommand{\frg}{\mathfrak{g}}
\newcommand{\frn}{\mathfrak{n}}
\newcommand{\fre}{\mathfrak{e}}
\newcommand{\frsu}{\mathfrak{su}}
\newcommand{\+}{{{\scriptscriptstyle +}}}
\newcommand{\Su}{{\mathbb S}^1}
\newcommand{\st}{\ |\ }
\newcommand{\diag}{{\rm diag}}
\numberwithin{equation}{section}
\title[The Laplacian flow for closed warped G$_2$-structures]{Closed warped G$_{\mathbf2}$-structures evolving under the Laplacian flow}
\author{Anna Fino and Alberto Raffero}
\subjclass[2010]{53C44, 53C10, 53C30}
\keywords{Laplacian flow, $\G_2$-structure, $\SU(3)$-structure, warped product, soliton}
\thanks{The authors were supported by GNSAGA of INdAM. The first author was also supported by PRIN 2015 
``Real and complex manifolds: geometry, topology and harmonic analysis'' of MIUR}
\address{(Anna Fino) Dipartimento di Matematica ``G. Peano'' \\ Universit\`a degli Studi di Torino\\
Via Carlo Alberto 10\\
10123 Torino\\ Italy}
\email{annamaria.fino@unito.it}
\address{(Alberto Raffero) Dipartimento di Matematica e Informatica ``U.~Dini'' \\ Universit\`a degli Studi di Firenze\\ Viale Morgagni 67/a\\ 50134 Firenze\\ Italy}
\email{alberto.raffero@unifi.it}
\begin{document}

\begin{abstract}
We study the behaviour of the Laplacian flow evolving closed $\G_2$-structures on warped products of the form $M^6\times\Su$, 
where the base $M^6$ is a compact 6-manifold endowed with an $\SU(3)$-structure. 
In the general case, we reinterpret the flow as a set of evolution equations on $M^6$ for the differential forms defining the $\SU(3)$-structure and the warping function.  
When the latter is constant, we find sufficient conditions for the existence of solutions of the corresponding coupled flow. 
This provides a method to construct immortal solutions of the Laplacian flow on the product manifolds $M^6\times\Su$. 
The application of our results to explicit cases allows us to obtain new examples of expanding Laplacian solitons.
\end{abstract}

\maketitle

\section{Introduction}
Let $(M^m,g_m)$ and $(N^n,g_n)$ be two compact, connected Riemannian manifolds. 
Assume that $g_n$ is Ricci-flat, so that $g_n(t) = g_n$ is a stationary solution of the Ricci flow on $N^n.$
Consider a positive function $f\in\mathcal{C}^\infty(M^m)$ and the product manifold $M^m\times N^n$ endowed with the Riemannian metric $g \coloneqq g_m + f^2\,g_n.$ 
Then, $(M^m\times N^n,g)$ is a warped product, and the Ricci flow on it starting from $g(0) = g$ can be considered as 
a coupled flow for the metric $g_m$ and the warping function $f$ on the base manifold $M^m$:
\begin{equation}\label{RFWPeqns}
\renewcommand\arraystretch{1.4}
\left\{
\begin{array}{rcl}
\ddt g_m(t) 	&=&  -2\,\Ric(g_m(t)) +2\,\frac{n}{f(t)}\,\Hess_m(f(t)),\\
\ddt f(t) 		&=& \Delta_m f(t) + (n-1)\frac{1}{f(t)}|\nabla f(t)|_{g(t)}^2,\\
g_m(0)		&=& g_m,\\
f(0)			&=& f,
\end{array}
\right.
\end{equation}
where $\Delta_m = {\rm div}_m\circ\grad_m$ and $\Hess_m$ are the Laplacian and the Hessian on $(M^m,g_m(t))$, respectively (see for instance \cite{LotSes, Tran}). 
In particular, the Ricci flow preserves the warped product structure in the following sense: if $(g_m(t), f(t))$ is the solution of \eqref{RFWPeqns} starting from $(g_m,f)$ at $t=0$, then 
$g(t)=g_m(t) + [f(t)]^2g_n$ is the solution of the Ricci flow on $M^m \times N^n$ with initial condition $g(0) = g_m+f^2g_n$.  

In the study of special geometric structures on Riemannian manifolds,  
warped products are  a useful tool to construct new examples starting from known ones. 
Consider for instance a 6-manifold $M^6$ endowed with an $\SU(3)$-structure, namely a pair of differential forms $\omega\in\Omega^2(M^6)$, $\psip\in\Omega^3(M^6)$ 
which are stable in the sense of \cite{Hit} and satisfy certain compatibility conditions. 
Then, there are various methods to define a $\G_2$-structure on the product $M^6\times\Su$ by means of the $\SU(3)$-structure on $M^6$, 
and in each case this 7-manifold turns out to be a Riemannian product or a warped product (see e.g.~\cite{ChSa,CleIva,KarMcTsu}). 

Recall that a $\G_2$-structure on a 7-manifold $M^7$ is defined by a stable 3-form $\f$ giving rise to a Riemannian metric $g_\f$ and to a volume form $dV_\f$.  
The intrinsic torsion of a $\G_2$-structure $\f$ is completely determined by $d\f$ and $d*_\f\f$, $*_\f$ being the Hodge operator defined by $g_\f$ and $dV_\f$, 
and it vanishes identically if and only if both $\f$ and $*_\f\f$ are closed \cite{Bry,FeGr}. 
When this happens,  the Riemannian holonomy group ${\rm Hol}(g_\f)$ is a subgroup of $\G_2$, and $g_\f$ is Ricci-flat. 
$\G_2$-structures whose defining 3-form is both closed and co-closed are called {\em torsion-free}, 
and they play a central role in the construction of metrics with holonomy $\G_2$. 
The first complete examples of such metrics were obtained by Bryant and Salamon in \cite{BrySal}, 
while compact examples of Riemannian manifolds with holonomy $\G_2$ were constructed first by Joyce \cite{Joy1}, and then by Kovalev \cite{Kov}, 
and by Corti, Haskins, Nordstr\"om, Pacini \cite{CHNP}.
A potential method to obtain new results in this direction is represented by geometric flows evolving $\G_2$-structures. 

Let $M^7$ be a 7-manifold endowed with a {\em closed} $\G_2$-structure $\f$, i.e., satisfying $d\f=0$. 
The {\em Laplacian flow} starting from $\f$ is the initial value problem 
\[
\begin{cases}
\ddt \f(t) = \Delta_{\f(t)}\f(t),\\
d \f(t)=0,\\
\f(0)=\f,
\end{cases}
\]
where $\Delta_{\f(t)}$ denotes the Hodge Laplacian of the Riemannian metric $g_{\f(t)}$ induced by $\f(t)$. 
This geometric flow was introduced by Bryant in \cite{Bry}  as a tool to find torsion-free $\G_2$-structures on compact manifolds. 
Short-time existence and uniqueness of the solution when $M^7$ is compact were proved by Bryant and Xu in the unpublished paper \cite{BryXu}.  
Recently, Lotay and Wei investigated the properties of the Laplacian flow in the series of papers \cite{LotWei1,LotWei2,LotWei3}. 

In \cite{KarMcTsu}, a flow evolving the 4-form $*_\f\f$ in the direction of minus its Hodge Laplacian was introduced,    
and its behaviour on warped products with base manifold $\Su$ and fibre a 6-manifold endowed with a nearly K\"ahler or a torsion-free $\SU(3)$-structure was investigated. 

In the recent paper \cite{FiYa}, Fine and Yao studied the evolution of a hypersymplectic structure $(\omega_1,\omega_2,\omega_3)$ on a compact 4-manifold $M^4$, 
assuming that the closed $\G_2$-structure induced by it on the 7-manifold $M^4\times\T^3$ evolves under the Laplacian flow.

Let $M^6$ be a compact 6-manifold endowed with an $\SU(3)$-structure $(\omega,\psip)$, and denote by $h$ the associated Riemannian metric.   
Motivated by the results recalled so far, in this paper we are interested in studying the behaviour of the Laplacian flow on warped products of the form 
$(M^6\times\Su,\f)$, where the fibre is the circle $\Su$, and the $\G_2$-structure is  defined by
\begin{equation}\label{g2strwp}
\f = f\,\omega\W ds+\psip,
\end{equation}
with $f\in\mathcal{C}^\infty(M^6)$ a positive function, and $s$ the angle coordinate on $\Su$. 
We call a $\G_2$-structure given by \eqref{g2strwp} {\em warped}, as it induces the warped product metric $g_\f=h+f^2 ds^2$.

Since $M^6\times\Su$ is compact,  the Laplacian flow starting from a closed warped $\G_2$-structure has a unique solution defined for short times. 
It is then natural to investigate whether there exists a relation between the Laplacian flow on $M^6\times\Su$ and a coupled flow for the $\SU(3)$-structure $(\omega,\psip)$ 
and the warping function $f$ on $M^6,$ and to study under which conditions the Laplacian flow preserves the expression \eqref{g2strwp} of $\f$.

The present paper begins with a review of basic facts about  $\SU(3)$- and $\G_2$-structures in Section \ref{settingsect}. 
In Section \ref{warpedclosedg2sect}, we study the properties of warped $\G_2$-structures. 
In particular, we observe that a warped $\G_2$-structure is closed if and only if the underlying $\SU(3)$-structure 
satisfies the equations $d\psip=0$ and $d\omega=\theta\W\omega$, where $\theta=-d\log(f)$.  
Then, we provide a method to construct such type of $\SU(3)$-structures, and we use it to obtain an explicit example on the 6-torus. 
Section \ref{lapflowgensect} is devoted to investigate the equivalence between the Laplacian flow evolving closed warped $\G_2$-structures and a coupled flow for 
the $\SU(3)$-structure and the warping function $f$ on $M^6$ (Proposition \ref{summinducedflow}). 
In Section \ref{lapflowproduct}, we restrict our attention to Riemannian product manifolds by assuming that the function $f$ is constant. 
In this case, the $\G_2$-structure given by \eqref{g2strwp} on $M^6\times\Su$ is closed if and only if the $\SU(3)$-structure is {\em symplectic half-flat}, namely if and only 
if both $\omega$ and $\psip$ are closed. Under this hypothesis, the coupled flow obtained in Section \ref{lapflowgensect} has a simpler expression, 
and we are able to get sufficient conditions on the initial datum $(\omega,\psip)$ guaranteeing the (long-time) existence of solutions (Theorem \ref{thmlapshf}). 
This provides a method to construct solutions of the Laplacian flow on $M^6\times\Su$ defined on a maximal interval of time $(T,+\infty)$, with $T<0$. 
In the last section, we apply our results to explicit examples. 
Starting with the classification of unimodular solvable Lie groups $\G$ admitting a left-invariant symplectic half-flat $\SU(3)$-structure \cite{FMOU}, 
we show that each one of them admits an $\SU(3)$-structure satisfying the hypothesis of Theorem \ref{thmlapshf}.  
It turns out that in some cases the corresponding closed $\G_2$-structure $\f$ is a {\em Laplacian soliton}, i.e., 
it satisfies the equation $\Delta_\f\f=\mathcal{L}_X\f+\lambda\f$, for some vector field $X\in\mathfrak{X}(\G\times\R)$ and some real number $\lambda$. 
This gives new examples of Laplacian solitons on solvable Lie groups in addition to those obtained in \cite{FFM,Lau1,Lau2,Lau3,Nic}.  
We conclude the section describing the solution of the Laplacian flow on the product of a twistor space endowed with a symplectic half-flat $\SU(3)$-structure and the circle, 
and giving two examples of non-flat closed $\G_2$-structures on the 7-torus.

\section{Preliminaries on SU(3)- and G$_2$-structures}\label{settingsect}
A $\G$-structure on an $m$-dimensional smooth manifold $M^m$ is a reduction of the structure group of the frame bundle from $\GL(m,\R)$ to a subgroup $\G$. 
It is known, see e.g.~\cite{CLSS, Hit1, Hit, Rei}, that when $\G=\SU(3)$ and $\G=\G_2$, the existence of a $\G$-structure is equivalent to the existence 
of suitable differential forms on $M^m$ satisfying the following non-degeneracy condition: 
at each point $p$ of $M^m$, their orbit under the natural action of the group $\GL(T_pM^m)$ is open. 
Such forms are usually called {\em stable} in literature.  
It is clear that being stable is an open property. Thus, small perturbations of a stable form are stable, too. 
For the sake of convenience, we briefly recall the algebraic description of those stable forms which are of interest for us in the next propositions.

\begin{proposition}[\cite{CLSS,Hit1,Hit}]
Let $V$ be an oriented, six-dimensional real vector space. Then
\begin{enumerate}[-]
\item a 2-form $\omega\in\Lambda^2(V^*)$ is stable if and only if it is non-degenerate, i.e., $\omega^3\neq0$;
\item every 3-form $\psi\in\Lambda^3(V^*)$ gives rise to an irreducible polynomial $P(\psi)$ of degree 4 which vanishes identically if and only if $\psi$ is not stable. 
\end{enumerate}
$P(\psi)$ is defined as follows. 
Fix a volume form $\nu\in\Lambda^6(V^*)$ on $V,$ consider the isomorphism $A:\Lambda^5(V^*)\rightarrow V\otimes\Lambda^6(V^*)$ induced by the wedge product 
$\W:\Lambda^5(V^*)\otimes V^*\rightarrow\Lambda^6(V^*)$, and let $K_\psi:V\rightarrow V$ be the linear map defined via the identity $K_\psi(v)\otimes\nu = A(\iota_v\psi\W\psi)$, 
$\iota_v\psi$ being the contraction of $\psi$ by the vector $v\in V.$ Then, $P(\psi) \coloneqq \frac16{\rm tr}(K_\psi^2)$. 
\end{proposition}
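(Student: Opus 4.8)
The plan is to exploit the defining feature of stability—that a $k$-form is stable exactly when its $\GL(V)$-orbit is open in $\Lambda^k(V^*)$—together with the fact that in both degrees $\GL(V)$ has only finitely many orbits. The top-dimensional orbits are then cut out by the non-vanishing of an explicit $\GL(V)$-relative invariant, and it remains to match that invariant with $\omega^3$ and with $P(\psi)$.

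For the $2$-form part I would invoke the linear Darboux normal form: any $\omega\in\Lambda^2(V^*)$ is $\GL(V)$-equivalent to $\sum_{i=1}^{k}e^{2i-1}\wedge e^{2i}$, where $2k$ is its rank, so there are precisely four orbits, indexed by $2k\in\{0,2,4,6\}$. A direct computation gives $\omega^3=6\,e^1\wedge\cdots\wedge e^6\neq0$ when $k=3$ and $\omega^3=0$ when $k\le2$. Hence the rank-$6$ orbit is exactly $\{\omega^3\neq0\}$; being the complement of the proper algebraic set $\{\omega^3=0\}$ and a single orbit, it is open, while the remaining orbits lie in that set and are not open. As a check, the stabiliser of a non-degenerate $\omega$ is $\Sp(6,\R)$, so the orbit has dimension $\dim\GL(6,\R)-\dim\Sp(6,\R)=36-21=15=\binom{6}{2}=\dim\Lambda^2(V^*)$. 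This proves that $\omega$ is stable if and only if $\omega^3\neq0$.

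For the $3$-form part the heart of the argument is the endomorphism $K_\psi$ of the statement. First I would record that $\psi\mapsto K_\psi$ is quadratic in $\psi$ and $\GL(V)$-equivariant (with $\nu$ transforming by $\det$), so $P(\psi)=\frac16{\rm tr}(K_\psi^2)$ is homogeneous of degree $4$ and is a relative invariant, $P(g\cdot\psi)=(\det g)^{2}P(\psi)$; in particular $P$ has constant sign along each orbit. Using the identities $\sum_i e^i\wedge\iota_{e_i}\psi=3\psi$ and $\psi\wedge\psi=0$ (the latter since $\psi$ has odd degree), a short computation gives ${\rm tr}(K_\psi)=0$, so $K_\psi$ is trace-free. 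The key algebraic lemma, which I would establish by evaluating on a normal form (or by noting that the equivariant endomorphism $K_\psi^2$ is forced to be a scalar), is \emph{Hitchin's identity}
\[
K_\psi^2=P(\psi)\,\mathrm{Id}_V.
\]
Granting it, $K_\psi$ is invertible if and only if $P(\psi)\neq0$.

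It then remains to identify invertibility of $K_\psi$ with stability. If $P(\psi)\neq0$, then $K_\psi/\sqrt{|P(\psi)|}$ squares to $\operatorname{sgn}(P(\psi))\,\mathrm{Id}_V$; when $P(\psi)<0$ this is a complex structure and $\psi$ reduces to $\mathrm{Re}(\zeta^1\wedge\zeta^2\wedge\zeta^3)$ with stabiliser $\SL(3,\C)$, while when $P(\psi)>0$ it is a product structure and one obtains the split model with stabiliser $\SL(3,\R)\times\SL(3,\R)$; in both cases the stabiliser has dimension $16$, so the orbit has dimension $36-16=20=\binom{6}{3}$ and is open, i.e.\ $\psi$ is stable. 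Conversely, if $P(\psi)=0$ then, $P$ being a relative invariant, $P$ vanishes identically on the orbit of $\psi$; since $P\not\equiv0$ its zero set is a proper algebraic subset with empty interior, so the orbit cannot be open and $\psi$ is not stable. Finally, irreducibility of $P$ follows by a semi-invariance argument: any irreducible factor has zero locus preserved by the identity component of $\GL(V)$, hence is itself a relative invariant, and comparing characters under the positive dilations $g=t\,\mathrm{Id}_V$ forces its degree to be even; this excludes factors of degrees $1$ and $3$, while a degree-$2$ factor would yield an $\SL(V)$-invariant quadratic form on $\Lambda^3(V^*)$, which does not exist because the only invariant bilinear map $\Lambda^3(V^*)\otimes\Lambda^3(V^*)\to\Lambda^6(V^*)$ is the wedge product, antisymmetric on $3$-forms and hence inducing the zero quadratic form. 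The main obstacle throughout is the identity $K_\psi^2=P(\psi)\,\mathrm{Id}_V$, from which all the equivalences follow cleanly.
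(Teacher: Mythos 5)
The paper does not prove this proposition: it is stated as a known preliminary and attributed to \cite{CLSS,Hit1,Hit}, so there is no internal proof to compare against. Your argument is essentially the one in those references (in particular Hitchin's ``The geometry of three-forms in six dimensions''): the linear Darboux normal form and orbit count for $2$-forms, and for $3$-forms the relative invariance of $P$, the identity $K_\psi^2=P(\psi)\,\mathrm{Id}_V$, the identification of the two open orbits via the induced complex or paracomplex structure with stabilisers $\SL(3,\C)$ and $\SL(3,\R)\times\SL(3,\R)$, and irreducibility by a semi-invariance argument. The outline is correct. The only point worth tightening is your treatment of the key identity $K_\psi^2=P(\psi)\,\mathrm{Id}_V$: the parenthetical claim that equivariance ``forces'' $K_\psi^2$ to be scalar is not an argument (equivariance only says $K_\psi^2$ commutes with the stabiliser of $\psi$, which on the open orbits has a two-dimensional commutant), and ``evaluating on a normal form'' presupposes the orbit classification you are in the course of establishing. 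The clean way to close this, consistent with your sketch, is to observe that $K_\psi^2-\frac16\mathrm{tr}(K_\psi^2)\,\mathrm{Id}_V$ is a polynomial map in $\psi$ that transforms by conjugation along orbits, verify its vanishing on a single explicit $\psi_0$ whose orbit is open (which only needs the stabiliser dimension count for that one representative), and conclude for all $\psi$ by Zariski density of that orbit.
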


We denote by $\Lambda^3_\+(V^*)$ the open orbit of stable 3-forms satisfying $P(\psi)<0$. 
The $\GL^\+(V)$-stabilizer of a 3-form lying in this orbit is isomorphic to $\SL(3,\C)$. 
As a consequence, every $\psi\in\Lambda^3_{\+}(V^*)$ gives rise to a complex structure  
\[
J_\psi:V\rightarrow V,\quad J_\psi \coloneqq \frac{1}{\sqrt{|P(\psi)|}}\,K_\psi,
\]   
which depends only on $\psi$ and on the volume form $\nu$.  
Moreover, the complex form $\psi+i J_\psi\psi$ is of type $(3,0)$ with respect to $J_\psi$, and the real 3-form $J_\psi\psi$ is stable, too. 

\begin{proposition}[\cite{CLSS,Hit}]
Let $W$ be an oriented, seven-dimensional real vector space. 
Consider a 3-form $\f\in\Lambda^3(W^*)$ and the symmetric bilinear map
\[
b_\f:W\times W\rightarrow\Lambda^7(W^*),\quad b_\f(v,w)=\frac16\,\iota_v\f\W\iota_w\f\W\f.
\]
Then, $\f$ is stable if and only if $\det(b_\f)^{\frac19}\in\Lambda^7(W^*)$ defines a volume form on $W.$
\end{proposition}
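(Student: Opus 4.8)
The plan is to reduce the statement to the nondegeneracy of $b_\f$. Read $b_\f$ as an element of $S^2(W^*)\otimes\Lambda^7(W^*)$; it is symmetric because $\iota_v\f$ and $\iota_w\f$ are $2$-forms and hence commute under $\W$. Its determinant is the bilinear form induced on the line $\Lambda^7W$, so that $\det(b_\f)\in(\Lambda^7(W^*))^{\otimes2}\otimes(\Lambda^7(W^*))^{\otimes7}=(\Lambda^7(W^*))^{\otimes9}$. Since $9$ is odd, the real $9$-th root of a nonzero element of this $1$-dimensional space is canonically defined, so $\det(b_\f)^{1/9}\in\Lambda^7(W^*)$ makes sense and is a volume form precisely when it is nonzero, i.e.\ precisely when $b_\f$ is nondegenerate. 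Thus it suffices to prove that $\f$ is stable if and only if $b_\f$ is nondegenerate.

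The next step is equivariance. For $A\in\GL(W)$ one has $\iota_v(A^*\f)=A^*(\iota_{Av}\f)$, whence $b_{A^*\f}(v,w)=A^*\big(b_\f(Av,Aw)\big)$; taking determinants shows that the nonvanishing of $\det(b_\f)$ is a $\GL(W)$-invariant condition and that $\f\mapsto\det(b_\f)^{1/9}$ is $\GL(W)$-equivariant. In particular the locus $U=\{\f:\ b_\f\text{ nondegenerate}\}$ is open (nonvanishing of a polynomial) and $\GL(W)$-invariant, so the asserted equivalence need only be verified on orbit representatives. For the forward direction I would take a basis $e^1,\dots,e^7$ and the standard positive $\Gd$-form $\fz=e^{123}+e^{145}+e^{167}+e^{246}-e^{257}-e^{347}-e^{356}$, and compute $b_{\fz}$ directly: the factor $\tfrac16$ is chosen exactly so that $b_{\fz}=\big(\sum_i e^i\otimes e^i\big)\otimes e^{1\cdots7}$, which is nondegenerate. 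Since the stabilizer of $\fz$ in $\GL(W)$ is $\Gd$ of dimension $14$, its orbit has dimension $49-14=35=\dim\Lambda^3(W^*)$ and is therefore open, so $\fz$ is stable; by equivariance the whole orbit consists of stable forms with nondegenerate $b$. The split form $\fz'$, for which the same computation yields a metric of signature $(3,4)$, accounts for the second open orbit over $\R$ and is handled identically.

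The converse direction is the main obstacle. If $b_\f$ is nondegenerate, then $\f$ canonically determines both the volume form $\det(b_\f)^{1/9}$ and the (pseudo-)metric $\gf=\det(b_\f)^{-1/9}\,b_\f$ on $W$, where the normalizing factor lies in $\Lambda^7W$ so that $\gf\in S^2(W^*)$. By the equivariance above, any $A$ fixing $\f$ fixes $\gf$, hence ${\rm Stab}_{\GL(W)}(\f)\subseteq\On(\gf)$, a group of dimension at most $21$. To conclude that the orbit is open one must show $\dim{\rm Stab}_{\GL(W)}(\f)=14$. My plan is to build from $(\f,\gf)$ the cross product $\times\colon W\times W\to W$ defined by $\gf(u\times v,w)=\f(u,v,w)$ (using $\gf$ to raise an index) and to promote it to a multiplication on $\R\oplus W$; the nondegeneracy of $b_\f$ is exactly what is needed to verify the composition and alternativity identities, so that $\R\oplus W$ becomes a (possibly split) octonion algebra whose automorphism group is $\Gd$, respectively its split real form $\Gd^*$, of dimension $14$. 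Since $\gf$ is $\f$-determined, ${\rm Stab}_{\GL(W)}(\f)$ coincides with this automorphism group, giving $\dim{\rm Stab}_{\GL(W)}(\f)=14$ and hence an open orbit.

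I expect the algebraic verification in this last step, namely that nondegeneracy of $b_\f$ forces the contraction identities making the cross product a composition algebra, to be the crux, since that is where the hypothesis is genuinely used. A slicker alternative is to invoke the fact that $(\GL(7,\C),\Lambda^3\C^7)$ is a prehomogeneous vector space with finitely many orbits, whose relative invariant has the same vanishing locus as $\det(b_\f)$; the nonvanishing locus of $\det(b_\f)$ is then automatically the union of the open orbits, which is exactly the set of stable forms.
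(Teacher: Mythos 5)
The paper does not prove this proposition: it is recalled verbatim from \cite{CLSS,Hit} as background, so there is no internal proof to compare against, and your attempt has to be measured against the standard arguments in those references. Your reductions are all correct and well normalized: $\det(b_\f)\in(\Lambda^7 W^*)^{\otimes 9}$ so the real ninth root is canonical and is a volume form iff $b_\f$ is nondegenerate; the equivariance $b_{A^*\f}(v,w)=A^*\bigl(b_\f(Av,Aw)\bigr)$ is right; the model computation $b_{\fz}=\bigl(\sum_i e^i\otimes e^i\bigr)\otimes e^{1\cdots7}$ checks out (e.g.\ $\iota_{e_1}\fz\W\iota_{e_1}\fz\W\fz=6\,e^{1\cdots7}$); and the count $49-14=35$ does show $\fz$ is stable. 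But two points are gaps. First, proving the forward direction ``on orbit representatives'' presupposes that the orbits of $\fz$ and $\fz'$ are the \emph{only} open orbits, which is itself a classification result you do not establish. This one is easily repaired without any classification: if $\f$ is stable and $\det(b_\f)=0$, then by equivariance the polynomial $\det(b_\cdot)$ vanishes on the open orbit of $\f$, hence identically, contradicting $\det(b_{\fz})\neq0$.

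The second gap is the substantive one and you correctly identify it as the crux: the converse, that nondegeneracy of $b_\f$ forces the orbit to be open. Your plan --- define $\gf$, raise an index to get a cross product, and claim that ``the nondegeneracy of $b_\f$ is exactly what is needed to verify the composition and alternativity identities'' --- asserts precisely the nontrivial content of the theorem without proving it; nothing in the proposal shows that a nondegenerate $b_\f$ yields $|u\times v|^2_{\gf}=|u|^2_{\gf}|v|^2_{\gf}-\gf(u,v)^2$, and that identity is where all the work lies. As it stands the converse is an outline, not a proof. Your ``slicker alternative'' via prehomogeneous vector spaces is in fact the standard complete route (and closest in spirit to \cite{CLSS} and to Hitchin), but to close it you must still supply: (i) that the singular set of $(\GL(7,\C),\Lambda^3\C^7)$ is the irreducible hypersurface cut out by the degree-$7$ fundamental relative invariant, so that any nonconstant relative invariant --- in particular the degree-$21$ polynomial $\det(b_\cdot)$, which is nonzero by the model computation --- has exactly that hypersurface as its zero locus; and (ii) that the real points of the complex open orbit are exactly the stable real forms. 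With those ingredients cited or proved, the argument is complete; without them, the equivalence is only established in one direction.
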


We use the notation $\Lambda^3_\+(W^*)$ to indicate  the open orbit of stable 3-forms for which the symmetric bilinear map 
$g_\f \coloneqq \det(b_\f)^{-\frac19}b_\f:W\times W\rightarrow\R$ is positive definite. 
The $\GL^\+(W)$-stabilizer of a 3-form belonging to this orbit is isomorphic to the exceptional Lie group $\G_2$. 

\begin{notation}
In what follows, we denote by $\Omega^k_\+(M^m)$ the space of sections of the open subbundle of $\Lambda^k(T^*M^m)$ whose fibre over each point $p\in M^m$ is given by
 $\Lambda^k_\+(T^*_pM^m)$.
\end{notation}

\subsection{SU(3)-structures on six-dimensional manifolds}
Let $M^6$ be a six-dimensional manifold, and let $\omega\in\Omega^2(M^6)$, $\psip\in\Omega^3_\+(M^6)$ be a pair of stable forms satisfying the compatibility condition 
\begin{equation}\label{compcond}
\omega\W\psip=0.
\end{equation}
Consider the almost complex structure $J=J_{\psip}$ determined by $\psip$ and the volume form $\frac{\omega^3}{6}$. 
Then, the 3-form $\psip$ is the real part of a nowhere vanishing $(3,0)$-form $\Psi \coloneqq \psip+i\psim$  
with $\psim \coloneqq J\psip = \psip(J\cdot,J\cdot,J\cdot) =  -\psip(J\cdot,\cdot,\cdot)$,  
where the last identity holds since $\psip$ is of type $(3,0)+(0,3)$ with respect to $J$. Moreover, 
$\omega$ is of type $(1,1)$ and, as a consequence, the 2-covariant tensor $h(\cdot,\cdot) \coloneqq \omega(\cdot,J\cdot)$ is symmetric. 
Under these assumptions, the pair $(\omega,\psip)$ defines an $\SU(3)$-structure on $M^6$ provided that $h$ is a Riemannian metric and the following 
normalization condition is satisfied
\begin{equation}\label{normcond}
\psip\W\psim=\frac23\,\omega^3.
\end{equation}
Conversely, every $\SU(3)$-structure on a 6-manifold arises in this fashion (cf.~\cite{Hit}). 

We denote by $|\cdot|_h$ the pointwise norm induced by $h$, by $dV_h=\frac{\omega^3}{6}$ the Riemannian volume form of $h$, 
and by $*_h$ the Hodge operator defined by $h$ and $dV_h$. The Hodge duals of $\omega$ and $\psip$ are given by
\begin{equation}\label{starSU3}
*_h\omega=\frac12\,\omega^2,\quad *_h\,\psip=\psim.
\end{equation}

By \cite[Thm.~1.1]{ChSa}, the intrinsic torsion of an $\SU(3)$-structure is determined by $d\omega$, $d\psip$, and $d\psim$. 
More precisely, the $\SU(3)$-irreducible decompositions of the modules $\Lambda^k\left((\R^6)^*\right)$, $k=3,4$, induce on $M^6$ the $h$-orthogonal splittings 
\begin{equation}\label{3formdec6}
\Omega^3(M^6) = \mathcal{C}^\infty(M^6)\,\psip \oplus \mathcal{C}^\infty(M^6)\,\psim \oplus \Omega^3_{12}(M^6) \oplus \Omega^1(M^6)\W\omega,
\end{equation}
\begin{equation}\label{4formsdec6}
\Omega^4(M^6) = \mathcal{C}^\infty(M^6)\,\omega^2 \oplus \Omega^2_8(M^6)\W\omega \oplus\Omega^1(M^6)\W\psip,
\end{equation}
where, following the notation of \cite{BedVez}, 
\begin{eqnarray*}
\Omega^2_8(M^6)		&\coloneqq&	\left\{\beta\in\Omega^2(M^6)\st \beta\W\omega^2=0,~J\beta=\beta\right\},\\
\Omega^3_{12}(M^6)	&\coloneqq&	\left\{\beta\in\Omega^3(M^6) \st \beta\W\omega=0,~\beta\W\psi_{\scriptscriptstyle\pm}=0 \right\}.
\end{eqnarray*}
The differential forms $d\omega$, $d\psip$, $d\psim$ decompose accordingly, and each of their summands corresponds to a component of the intrinsic torsion in the 
$\SU(3)$-irreducible splitting of $(\R^6)^*\otimes\frsu(3)^\perp$. 
In particular, the intrinsic torsion vanishes identically if and only if $\omega$, $\psip$, and $\psim$ are all closed. 
Moreover, $\SU(3)$-structures can be divided into classes according to the vanishing of the components of $d\omega$, $d\psip$, $d\psim$. 

We shall review the properties of those $\SU(3)$-structures which are of interest for us in due course, while we refer the reader to \cite{BedVez,ChSa} for a more detailed description. 

\subsection{G$_{\mathbf 2}$-structures on seven-dimensional manifolds} 
A $\G_2$-structure on a 7-manifold $M^7$ is characterized by the existence of a stable 3-form $\f\in\Omega^3_\+(M^7)$ 
giving rise to a Riemannian metric $g_\f$ and to a volume form $dV_\f$ on $M^7$  via the identity  
\[
g_\f(X,Y)\,dV_\f = \frac16\,\iota_X\f\W\iota_Y\f\W\f, \quad X,Y\in\mathfrak{X}(M^7).
\]
We denote by $*_\f$ the Hodge operator defined by $g_\f$ and $dV_\f$, and by $|\cdot|_\f$ the pointwise norm induced by $g_\f$. 

Consider the Levi Civita connection $\nabla^\f$ of $g_\f$. By \cite{Bry,FeGr}, the intrinsic torsion of the $\G_2$-structure $\f$ can be identified with $\nabla^\f\f$, 
and it is completely determined by the exterior derivatives $d\f$ and $d*_\f\f$.
If both $\f$ and $*_\f\f$ are closed, the intrinsic torsion vanishes identically, the Riemannian holonomy group ${\rm Hol}(g_\f)$ is a subgroup of $\G_2$, and $g_\f$ is Ricci-flat. 
When this happens, the $\G_2$-structure is said to be {\em torsion-free}.

In this paper, we focus only on the class of {\em closed} $\G_2$-structures $\f$, i.e., satisfying $d\f=0$.  
In this case, there exists a unique 2-form $\tau\in \Omega^2_{14}(M) \coloneqq \left\{\beta\in\Omega^2(M) \st \beta\W*_{\f}\f = 0\right\}$  
such that $d*_\f\f=\tau\W\f$ (cf.~\cite[Prop.~1]{Bry}). The 2-form $\tau$ is known as  {\em intrinsic torsion form} of $\f$. 
Examples of closed $\G_2$-structures were obtained for instance in \cite{ClSw,CoFe,Fer}.

\section{Closed warped G$_2$-structures}\label{warpedclosedg2sect}
Let $M^6$ be a connected six-dimensional manifold endowed with an $\SU(3)$-structure $(\omega,\psip)$. 
\begin{definition} 
A $\G_2$-structure on the 7-manifold $M^7\coloneqq M^6\times\Su$ is {\em warped} if it is of the form $\f=f\,\omega\W ds+\psip$, 
for some positive function $f\in\mathcal{C}^\infty(M^6)$.
\end{definition}

The Riemannian metric induced by a warped $\G_2$-structure $\f$ is $g_\f=h+f^2ds^2$, and the associated volume form is $dV_\f=f\,dV_h\W ds$. 
Therefore, $(M^7,g_\f)$ is a warped product with warping function $f$, base manifold $M^6$ and fibre $\Su$. 

The Hodge dual of the 3-form $\varphi$  defined by \eqref{g2strwp} is
\begin{equation}\label{g2strwpstar}
*_\f\f = \frac12\,\omega^2 + f\,\psim\W ds. 
\end{equation}
This can be checked using \eqref{starSU3} and the next result, whose proof is an immediate consequence of the definition of the Hodge operator. 
\begin{lemma}\label{HodgestarWP}
Let $\beta\in\Omega^k(M^6)$ be a  differential $k$-form on $M^6,$ and let $*_h$ and $*_\f$ be the Hodge operators determined by $(h, dV_h)$ and $(g_\f, dV_\f)$, respectively. Then
\begin{enumerate}[{\rm i)}]
\item\label{warpi} $*_\f\beta = f*_h\beta\W ds$;
\item\label{warpii} $*_\f(\beta\W ds) = (-1)^kf^{-1}*_h\beta$.
\end{enumerate}
\end{lemma}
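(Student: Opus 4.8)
The plan is to compute both sides in an adapted local orthonormal coframe and to compare them using the explicit description of the Hodge operator on such a coframe. First I would fix an oriented local $h$-orthonormal coframe $e^1,\dots,e^6$ on $M^6$ with $dV_h=e^1\W\cdots\W e^6$, and set $e^7\coloneqq f\,ds$. Since $g_\f=h+f^2ds^2$, the coframe $e^1,\dots,e^7$ is $g_\f$-orthonormal, and $e^1\W\cdots\W e^7=dV_h\W f\,ds=dV_\f$, so the chosen orientation is precisely the one inducing $*_\f$. Because both identities are pointwise $\mathcal{C}^\infty(M^6)$-linear in $\beta$ and the Hodge operators are algebraic (tensorial) maps, it suffices to verify them for $\beta=e^I$ with $I=\{i_1<\cdots<i_k\}\subseteq\{1,\dots,6\}$; the factors $f^{\pm1}$ then pull out of $*_\f$ for free.

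For such a basis form one has $*_h e^I=\varepsilon_6(I)\,e^{I^c}$ and $*_\f e^I=\varepsilon_7(I)\,e^{J}$, where $I^c=\{1,\dots,6\}\setminus I$, $J=\{1,\dots,7\}\setminus I=I^c\cup\{7\}$, and $\varepsilon_6(I)$, $\varepsilon_7(I)$ are the signs of the permutations $(I,I^c)$ of $(1,\dots,6)$ and $(I,I^c,7)$ of $(1,\dots,7)$, respectively. The key observation for \ref{warpi} is that appending the fixed last index $7$ does not change the sign, so $\varepsilon_7(I)=\varepsilon_6(I)$; hence $*_\f e^I=\varepsilon_6(I)\,e^{I^c}\W e^7=(*_h e^I)\W e^7=f\,(*_h e^I)\W ds$, which is \ref{warpi} after restoring the function.

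For \ref{warpii} I would write $\beta\W ds=f^{-1}\,e^I\W e^7$ and compute $*_\f(e^I\W e^7)=\varepsilon_7'(I)\,e^{I^c}$, where now $\varepsilon_7'(I)$ is the sign of $(I,7,I^c)$ of $(1,\dots,7)$. Comparing with $\varepsilon_7(I)=\varepsilon_6(I)$ amounts to moving the index $7$ across the $6-k$ entries of $I^c$, which produces the factor $(-1)^{6-k}=(-1)^k$; thus $*_\f(e^I\W e^7)=(-1)^k\,*_h e^I$, and multiplying by $f^{-1}$ yields \ref{warpii}. The only genuinely delicate point is this sign bookkeeping, and in particular pinning down the $(-1)^k$, while the rest is just the routine translation of ``orthonormal coframe plus matching orientation'' into the two formulas above. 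One could equally phrase the whole argument through the defining relation $\gamma\W*_\f\beta=\langle\gamma,\beta\rangle_{g_\f}\,dV_\f$, but the coframe computation makes the signs most transparent.
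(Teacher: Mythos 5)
Your proof is correct, and it is essentially the paper's approach spelled out in detail: the paper simply declares the lemma ``an immediate consequence of the definition of the Hodge operator,'' and your orthonormal-coframe computation with the sign bookkeeping for moving the index $7$ past $I^c$ (yielding the $(-1)^{6-k}=(-1)^k$) is exactly the verification being alluded to.
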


\begin{remark}
It is worth observing here that the 3-form $\varphi$  given  by \eqref{g2strwp} defines a different type of warped $\G_2$-structure 
when the base manifold is $\Su$ and $f\in\mathcal{C}^\infty(\Su)$.   
Such kind of warped $\G_2$-structures were considered for instance in \cite{CleIva,KarMcTsu}.  
\end{remark}

\begin{notation}
For the sake of clarity, from now on we shall denote the exterior derivative and its formal adjoint on an $m$-dimensional oriented Riemannian manifold $(M^m,g,dV)$ 
by $d_m$ and $d^*_m$, respectively. 
Recall that for any $k$-form $\beta\in\Omega^k(M^m)$ it holds $d^*_m\beta = (-1)^{m(k+1)+1}*d_m*\beta$, where $*$ is the Hodge operator determined by $(g,dV)$. 
\end{notation}

Let us focus on closed warped $\Gd$-structures. 
First of all, observe that it is possible to characterize them in terms of $d_6\omega$ and $d_6\psip$. 
\begin{lemma}\label{G2closedSU3corresp}
A warped $\G_2$-structure $\f=f\,\omega\W ds+\psip$ on $M^7 = M^6 \times  \Su$ is closed if and only if 
$(\omega,\psip)$ satisfies 
\begin{equation}\label{structueqnsu3}
\begin{cases}
d_6\omega = \theta\W\omega,\\
d_6\psip = 0,\\
d_6\psim = \Wd\W\omega,
\end{cases}
\end{equation}
where $\theta \coloneqq -d_6\log(f)$ and $\Wd\in\Omega^2_8(M^6)$ are the intrinsic torsion forms of $(\omega,\psip)$.
\end{lemma}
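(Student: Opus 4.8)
The plan is to compute $d_7\f$ directly, exploiting the product structure of $M^7 = M^6\times\Su$. Since $\omega$, $\psip$ and $f$ are all pulled back from $M^6$ and hence independent of the fibre coordinate $s$, and since $d(ds) = 0$, I would first establish that $d_7\f = d_6(f\,\omega)\wedge ds + d_6\psip$. Writing $d_6 f = -f\,\theta$ (which is just the definition $\theta = -d_6\log f$), the first summand becomes $f\,(d_6\omega - \theta\wedge\omega)\wedge ds$, so that $d_7\f = f\,(d_6\omega - \theta\wedge\omega)\wedge ds + d_6\psip$.

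Next I would use the splitting $\Omega^4(M^7) = \Omega^4(M^6)\oplus\big(\Omega^3(M^6)\wedge ds\big)$ determined by the product structure. The two summands of $d_7\f$ lie in different factors of this splitting, whence $d_7\f = 0$ if and only if $f\,(d_6\omega - \theta\wedge\omega) = 0$ and $d_6\psip = 0$. As $f$ is a positive function, the first condition is equivalent to $d_6\omega = \theta\wedge\omega$. This already yields the equivalence between closedness and the first two equations of the system, and is essentially a routine computation.

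It then remains to show that the first two equations force the third one, namely $d_6\psim = \Wd\wedge\omega$ with $\Wd\in\Omega^2_8(M^6)$. Decomposing $d_6\psim\in\Omega^4(M^6)$ according to \eqref{4formsdec6}, write $d_6\psim = c\,\omega^2 + \Wd\wedge\omega + \gamma\wedge\psip$ for some $c\in\mathcal{C}^\infty(M^6)$, $\Wd\in\Omega^2_8(M^6)$ and $\gamma\in\Omega^1(M^6)$. To see that $c = 0$, I would differentiate the compatibility relation $\omega\wedge\psim = 0$: using $d_6\omega = \theta\wedge\omega$ together with $\omega\wedge\psim = 0$ gives $\omega\wedge d_6\psim = 0$, and since $\Wd\wedge\omega^2 = 0$ (by definition of $\Omega^2_8$) and $\omega\wedge\psip = 0$, this reduces to $c\,\omega^3 = 0$, forcing $c = 0$ because $\omega$ is stable.

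I expect the vanishing of the remaining component $\gamma\wedge\psip$ to be the main obstacle, since it cannot be extracted from the compatibility conditions alone. Here I would appeal to the $\SU(3)$ structure equations of Chiossi--Salamon (see \cite{ChSa,BedVez}): the Lee-type torsion $1$-form $w_5$ governing the $\Omega^1\wedge\psip$ component is common to both $d_6\psip$ and $d_6\psim$. The hypothesis $d_6\psip = 0$ forces $w_5 = 0$---using that $\alpha\mapsto\alpha\wedge\psip$ is injective on $\Omega^1(M^6)$, which follows from the direct sum \eqref{4formsdec6}---so that the $\Omega^1\wedge\psip$ part of $d_6\psim$ also vanishes and $\gamma\wedge\psip = 0$. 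Combining this with $c = 0$ gives $d_6\psim = \Wd\wedge\omega$, and the converse implication (all three equations $\Rightarrow$ closed) is immediate from the computation in the first paragraph.
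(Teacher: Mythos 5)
Your proposal is correct and follows essentially the same route as the paper: the paper's proof consists precisely of the identity $d_7\f=(d_6f\W\omega+f\,d_6\omega)\W ds+d_6\psip$ together with an appeal to the $\SU(3)$ structure equations for the fact that the third equation in \eqref{structueqnsu3} is implied by the first two. You simply spell out that last step (killing the $\omega^2$-component by differentiating $\omega\W\psim=0$, and the $\Omega^1(M^6)\W\psip$-component via the vanishing of the common torsion form $w_5$), which the paper delegates to the citation of \cite[Sect.~2.5]{BedVez}.
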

\begin{proof}
The assertion follows from the identity $d_7\f=(d_6f\W\omega+f\,d_6\omega)\W ds+d_6\psip$, and the results of \cite[Sect.~2.5]{BedVez}. 
Notice that the third equation in \eqref{structueqnsu3} is automatically implied by the first two. 
\end{proof}

\begin{remark}\label{W2W4remark}\ 
\begin{enumerate}[i)]
\item Following the notation of \cite{ChSa}, the intrinsic torsion of an $\SU(3)$-structure satisfying  \eqref{structueqnsu3} belongs to $\mathcal{W}_2^-\oplus\mathcal{W}_4$. 
\item\label{W2W4remark2} Since $\theta=-d_6\log(f)$ is an exact 1-form, the first equation in \eqref{structueqnsu3} implies that $\omega$ is {\it globally conformal symplectic}. 
More precisely, the global conformal change $f\omega$ defines a symplectic form on $M^6$. 
Furthermore, the pair 
\[
\widetilde\omega \coloneqq f\omega,\quad \widetilde{\psi}_{\scriptscriptstyle+} \coloneqq f^{\frac32}\psip,
\]
defines an $\SU(3)$-structure such that $d_6\widetilde\omega=0$ and whose intrinsic torsion belongs to $\mathcal{W}_2^-\oplus\mathcal{W}_5$.  
\item\label{Leef} $\theta$ is the {\em Lee form} of the almost Hermitian structure $(\omega,J)$, thus $d_6^*\omega = 2 J\theta$.
\item\label{rempropWd} $\Wd$ is a primitive 2-form of type $(1,1)$. Hence, it satisfies the identities $\Wd\W\psi_{\scriptscriptstyle \pm}=0$, $\Wd\W\omega^2=0$, and 
$\Wd\W\omega=-*_h\Wd$. 
Consequently, $\Wd=d_6^*\psip$ when \eqref{structueqnsu3} holds. 
\item\label{fconstiffshf} When $f$ is constant, the 1-form $\theta$ vanishes identically and both $\omega$ and $\psip$ are closed. 
In such a case, $(\omega,\psip)$ is a {\it symplectic half-flat} $\SU(3)$-structure (\cite{ConTom,DeBTom,TomVez}). 
The converse is also true: if $\f=f\,\omega\W ds+\psip$ is closed and $(\omega,\psip)$ is symplectic half-flat, then $d_6f\W\omega=0$, which implies that $d_6f=0$. 
\end{enumerate}
\end{remark}

We describe now a possible strategy to construct examples of $\SU(3)$-structures satisfying \eqref{structueqnsu3}.   
Assume that $M^6$ is endowed with a symplectic 2-form $\omega$ and a closed stable 3-form $\rho\in\Omega^3_\+(M^6)$ inducing an almost complex structure $J$ 
which is {\em calibrated} by $\omega$, 
i.e., $\omega$ is of type $(1,1)$ with respect to $J$ and the symmetric tensor $\omega(\cdot,J\cdot)$ defines a Riemannian metric.  
Then, $\rho+iJ\rho$ is a nowhere vanishing complex $(3,0)$-form. In general, the normalization condition \eqref{normcond} is not satisfied by the pair $(\omega,\rho)$. 
Nevertheless, there exists a positive function $u\in\mathcal{C}^\infty(M^6)$ such that
\[
u^3\rho\W J\rho = \frac23\,\omega^3. 
\]
The desired $\SU(3)$-structure is then given by $\left(\widehat\omega\coloneqq{u}^{\scriptscriptstyle-1}\omega, \rho\right)$. 

\begin{example}\label{extorus}
In \cite{TomVez}, an example of a family of symplectic half-flat $\SU(3)$-structures on the 6-torus $\T^6 = \R^6/\Z^6$ was given. Let us recall it.  
Denote by $(x^1,\ldots,x^6)$ the standard coordinates on $\R^6$, let $a(x^1)$, $b(x^2)$ and $c(x^3)$ be three $\Z^6$-periodic and nonconstant 
smooth functions on $\R^6$, and let 
\[
\ell_1 \coloneqq b(x^2)-c(x^3),\quad \ell_2\coloneqq c(x^3)-a(x^1),\quad \ell_3\coloneqq a(x^1)-b(x^2).
\]
Then, for each $r>0$ the symplectic half-flat $\SU(3)$-structure on $\T^6$ is given by the following pair of $\Z^6$-invariant differential forms
\begin{equation}\label{SHFT6}
\renewcommand\arraystretch{1.4}
\begin{array}{rcl}
\omega &=& dx^{14}+dx^{25}+dx^{36},\\
\psip(r) &=& -e^{r\,\ell_3}\,dx^{126} +e^{r\,\ell_2}\,dx^{135}  -e^{r\,\ell_1}\,dx^{234} +dx^{456},
\end{array}
\end{equation}
where $dx^{ijk\cdots}$ is a shorthand for the wedge product $dx^i\W dx^j \W dx^k \W \cdots$. 
The almost complex structure $J_r$ induced by $(\omega,\psip(r))$ is the following 
\[
J_r\left(\frac{\partial}{\partial x^k} \right) = e^{-r\,\ell_k}\frac{\partial}{\partial x^{k+3}},\quad J_r\left(\frac{\partial}{\partial x^{k+3}} \right) = -e^{r\,\ell_k}\frac{\partial}{\partial x^k},\quad 
k=1,2,3.
\]

Consider three positive, $\Z^6$-periodic smooth functions $\kappa_1(x^1,x^4)$, $\kappa_2(x^2,x^5)$, $\kappa_3(x^3,x^6)$, and the closed, $\Z^6$-invariant 2-form
\begin{equation}\label{tildeomgT6}
\widetilde{\omega}\coloneqq (\kappa_1)^3\,dx^{14}+(\kappa_2)^3\,dx^{25}+(\kappa_3)^3\,dx^{36}.
\end{equation}
$\widetilde\omega$ is a symplectic form on $\T^6$, it is of type $(1,1)$ with respect to the almost complex structure $J_r$,  
since $\widetilde\omega(J_r\cdot,J_r\cdot)=\widetilde\omega$, and the symmetric tensor $\widetilde\omega(\cdot,J_r\cdot)$ is given by
\[
{e^{-r\,\ell_1}\kappa_1^3}(dx^1)^2 + {e^{-r\,\ell_2}\kappa_2^3}(dx^2)^2 + {e^{-r\,\ell_3}\kappa_3^3}(dx^3)^2 +
{e^{r\,\ell_1}\kappa_1^3}(dx^4)^2 + {e^{r\,\ell_2}\kappa_2^3}(dx^5)^2 + {e^{r\,\ell_3}\kappa_3^3}(dx^6)^2.
\]
Hence, $J_r$ is $\widetilde\omega$-calibrated. Moreover,
\[
\frac23\,\widetilde\omega^3 = \frac23\,(\kappa_1\kappa_2\kappa_3)^3\,\omega^3 = (\kappa_1\kappa_2\kappa_3)^3\, \psip(r)\W\psim(r). 
\]
Therefore, the pair $\left(\widehat\omega\coloneqq \frac{1}{\kappa_1\kappa_2\kappa_3}\,\widetilde\omega,~\psip(r)\right)$ defines a family of 
$\SU(3)$-structures on $\T^6$ satisfying \eqref{structueqnsu3} with $\theta =  -d_6 \log\left(\kappa_1\kappa_2\kappa_3\right)$. 
\end{example}

The intrinsic torsion forms $\theta$, $\Wd$ of an $\SU(3)$-structure fulfilling \eqref{structueqnsu3} completely determine the intrinsic torsion form 
$\tau$ of the corresponding closed warped $\G_2$-structure. 
In the next lemma, we summarize some identities which are useful to show this assertion.  
\begin{lemma}\label{su3identities}
Let $(\omega,\psip)$ be an $\SU(3)$-structure on a 6-manifold $M^6,$ and consider a 1-form $\beta\in\Omega^1(M^6)$. 
Then 
\begin{equation}\label{staralphapsim}
*_h(\beta\W\psim) = \iota_{\beta^\sh}\psip, 
\end{equation}
where $\beta^\sh$ is the $h$-dual vector field of $\beta$.  
As a consequence, the following identities hold
\begin{enumerate}[{\rm i)}]
\item\label{idi} $*_h(\beta\W\psim)\W\omega =J\beta\W\psip = \beta\W\psim$;
\item\label{idii} $*_h(\beta\W\psim)\W\omega^2 = 0$;
\item\label{idiii} $*_h(\beta\W\psim)\W\psip = -*_h(\beta\W\psip)\W\psim =  \beta\W\omega^2=2*_h(J\beta)$;
\item\label{idiv} $*_h(\beta\W\psim)\W\psim = *_h(\beta\W\psip)\W\psip  = -J\beta\W\omega^2=2*_h\beta$.
\end{enumerate}
\end{lemma}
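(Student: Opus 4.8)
The plan is to prove the master formula $*_h(\beta\W\psim)=\iota_{\beta^\sh}\psip$ first, and then to read off the four numbered identities as algebraic consequences, using only the pointwise relations \eqref{compcond}, \eqref{normcond}, \eqref{starSU3} together with the fact that $\Psi\coloneqq\psip+i\psim$ is of type $(3,0)$. Since every object in the statement ($*_h$, $J$, $\omega$, $\psip$, $\psim$, $h$) is pointwise and tensorial, at each point I may fix an adapted orthonormal coframe realising the standard flat $\SU(3)$-model; by linearity in $\beta$ it then suffices to check each assertion on the coframe $1$-forms, reducing everything to a finite linear-algebra computation. I would keep this adapted-coframe reduction in reserve as the robust fallback, while giving the structural argument below wherever it is clean.

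For the master identity I would argue conceptually. Recall the standard Riemannian formula $*(\beta\W\gamma)=(-1)^{p}\,\iota_{\beta^\sh}(*\gamma)$ for a $p$-form $\gamma$. Applying it with $\gamma=\psim$ and using \eqref{starSU3}, which gives $*_h\psip=\psim$ and hence $*_h\psim=*_h*_h\psip=(-1)^{3\cdot3}\psip=-\psip$, yields $*_h(\beta\W\psim)=(-1)^{3}\iota_{\beta^\sh}(*_h\psim)=-\iota_{\beta^\sh}(-\psip)=\iota_{\beta^\sh}\psip$. The same computation with $\gamma=\psip$ records the companion formula $*_h(\beta\W\psip)=-\iota_{\beta^\sh}\psim$, which I will need later.

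The first two identities then follow by wedging. For (i), contracting the compatibility relation $\omega\W\psip=0$ of \eqref{compcond} by $\beta^\sh$ gives $\iota_{\beta^\sh}\psip\W\omega=\psip\W\iota_{\beta^\sh}\omega$, and the compatibility $h(\cdot,\cdot)=\omega(\cdot,J\cdot)$ forces $\iota_{\beta^\sh}\omega=-J\beta$, so this equals $J\beta\W\psip$; the remaining equality $J\beta\W\psip=\beta\W\psim$ is precisely the real part of $J\beta\W\Psi=-i\,\beta\W\Psi$, an identity valid because $\beta^{1,0}\W\Psi$ vanishes by type while $J$ acts as $\pm i$ on $(1,0)\oplus(0,1)$. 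For (ii), contracting $\psip\W\omega^2=0$ (a $7$-form) by $\beta^\sh$ produces $\iota_{\beta^\sh}\psip\W\omega^2=-2\,\psip\W\iota_{\beta^\sh}\omega\W\omega$, and the right-hand side is a multiple of $\psip\W\omega=0$.

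Identities (iii) and (iv) are the delicate part, and I would treat them last. Using $*_h(\beta\W\psip)=-\iota_{\beta^\sh}\psim$, the claimed middle equalities become $\iota_{\beta^\sh}\psip\W\psip=\iota_{\beta^\sh}\psim\W\psim$ and $\iota_{\beta^\sh}\psip\W\psim=-\iota_{\beta^\sh}\psim\W\psip$. The genuine obstacle is that contracting $\psip\W\psip=0$ or $\psim\W\psim=0$ gives no information, so the "diagonal" wedges cannot be extracted by a naive Leibniz step. Contracting instead the normalization $\psip\W\psim=\tfrac23\,\omega^3$ of \eqref{normcond} by $\beta^\sh$ yields only the combination
\[
\iota_{\beta^\sh}\psip\W\psim-\iota_{\beta^\sh}\psim\W\psip=-2\,J\beta\W\omega^2,
\]
(and the analogous contraction of $\Psi\W\bar\Psi=-\tfrac{4i}{3}\,\omega^3$ reproduces the same relation, the diagonal terms cancelling). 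Thus one more independent input is needed: I would supply it either by exploiting that the form-endomorphism $J$ is an isometry sending $\psip\mapsto\psim$, $\psim\mapsto-\psip$, $\omega\mapsto\omega$ to pin down $\iota_{\beta^\sh}\psip\W\psip=\iota_{\beta^\sh}\psim\W\psim=\beta\W\omega^2$ and $\iota_{\beta^\sh}\psip\W\psim=-J\beta\W\omega^2$, or, most cleanly, by the adapted-coframe evaluation on each $\beta=e^{i}$. The final rewritings $\beta\W\omega^2=2*_h(J\beta)$ and $-J\beta\W\omega^2=2*_h\beta$ are then the standard almost-Hermitian formulas for $*_h$ on $1$-forms, related to each other by applying $J$ and $J^2=-1$.
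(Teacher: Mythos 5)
Your proposal is correct in substance and follows the paper for the master formula \eqref{staralphapsim} (same appeal to $*(\beta\W\sigma)=(-1)^k\iota_{\beta^\sharp}*\sigma$ together with $*_h*_h=-1$ on $3$-forms) and for i) and ii) (contraction of $\omega\W\psip=0$, with ii) obtained equivalently by wedging i) with $\omega$ in the paper versus contracting $\psip\W\omega^2=0$ in your version). The genuine divergence is in iii) and iv). You correctly diagnose that contracting the normalization \eqref{normcond} only yields the antisymmetric combination $\iota_{\beta^\sh}\psip\W\psim-\iota_{\beta^\sh}\psim\W\psip=-2\,J\beta\W\omega^2$, and that one further independent relation is required to separate the four wedge products. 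The paper supplies exactly this missing input by a type argument: $\iota_X\Psi$ is a $(2,0)$-form, so $\iota_X\Psi\W\Psi$ is a $(5,0)$-form on a manifold of complex dimension $3$ and hence vanishes identically; taking real and imaginary parts gives $\iota_X\psip\W\psip=\iota_X\psim\W\psim$ and $\iota_X\psip\W\psim=-\iota_X\psim\W\psip$, after which the contraction of \eqref{normcond} by $\beta^\sh$ and by $J\beta^\sh$ (using $\iota_X\psip=\iota_{JX}\psim$) closes the argument. Your two proposed substitutes are of unequal strength: the adapted-coframe evaluation is a complete and rigorous proof, since every identity in the lemma is a pointwise, $\SU(3)$-equivariant statement that may be checked on the standard model and on basis covectors; but the $J$-isometry route, as sketched, is at risk of only reproducing the contraction of \eqref{normcond} along $J\beta^\sh$ (the symmetric combination $\iota_{\beta^\sh}\psip\W\psip+\iota_{\beta^\sh}\psim\W\psim=2\,\beta\W\omega^2$) rather than an independent relation, so you should not rely on it alone. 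In short: your argument is sound provided you invoke the coframe computation for iii) and iv); the paper's observation that $\iota_X\Psi\W\Psi$ is a vanishing $(5,0)$-form is precisely the clean structural input you were looking for and lets one avoid the model computation entirely.
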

\begin{proof}
\eqref{staralphapsim} is a consequence of the identity 
\begin{equation}\label{wedgestarcontraction}
*(\beta\W\sigma) = (-1)^k\,\iota_{\beta^\sharp}*\sigma,
\end{equation}
which holds for any 1-form $\beta$ and any $k$-form $\sigma$ on an oriented Riemannian manifold (see e.g.~\cite{Kar2}). 
To show \ref{idi}), it is sufficient to consider the contraction of $\omega\W\psip=0$ by $\beta^\sh$ and to notice that 
$\iota_{\beta^\sh}\omega=-J\beta$. 
\ref{idii}) follows immediately from \ref{idi}). 
\ref{idiii}) and \ref{idiv}) can be proved in a similar way. 
First, starting from the $(3,0)$-form $\Psi = \psip+i\psim$, the general equation $\iota_X\Psi\W\Psi=0$ gives
\[
\iota_X\psip\W\psip = \iota_X\psim\W\psim,\quad \iota_X\psip\W\psim = -\iota_X\psim\W\psip. 
\]
Using these equations and \eqref{wedgestarcontraction}, the first identities in \ref{idiii}) and in \ref{idiv}) follow.  
The remaining identities can be obtained contracting both members of the normalization condition \eqref{normcond} by $J\beta^\sh$  
and observing that $\iota_X\psip =\iota_{JX}\psim$ holds for any vector field $X\in\mathfrak{X}(M^6)$. 
\end{proof}

\begin{proposition}\label{tauwp}
The intrinsic torsion form $\tau\in\Omega^2_{14}(M^7)$ of a closed warped $\Gd$-structure $\f=f\,\omega\W ds +\psip$ on $M^7=M^6\times\Su$ has the following expression
\begin{equation}\label{tau2WP}
\tau 	= \Wd +*_h(\theta\W\psim)+2\,fJ\theta\W ds.
\end{equation}
\end{proposition}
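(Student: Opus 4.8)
The plan is to use the characterization of the torsion form recalled in Section \ref{settingsect}: since $\f$ is closed, $\tau$ is the \emph{unique} 2-form lying in $\Omega^2_{14}(M^7) = \{\beta\in\Omega^2(M^7) : \beta\W*_\f\f = 0\}$ and satisfying $d_7*_\f\f = \tau\W\f$. Accordingly, I would set $\widetilde\tau \coloneqq \Wd + *_h(\theta\W\psim) + 2\,fJ\theta\W ds$ and prove the two identities $\widetilde\tau\W\f = d_7*_\f\f$ and $\widetilde\tau\W*_\f\f = 0$; uniqueness then yields $\tau = \widetilde\tau$, which is \eqref{tau2WP}.

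First I would compute the right-hand side of the first identity. Starting from the expression $*_\f\f = \frac12\omega^2 + f\,\psim\W ds$ of \eqref{g2strwpstar}, and using that forms pulled back from $M^6$ are $s$-independent (so that $d_7 = d_6$ on them and $d_7 ds = 0$), the relation $d_6 f = -f\theta$ coming from $\theta = -d_6\log(f)$, and the structure equations \eqref{structueqnsu3}, a direct computation gives
\[
d_7*_\f\f = \theta\W\omega^2 + f\,(\Wd\W\omega - \theta\W\psim)\W ds .
\]

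Next I would expand $\widetilde\tau\W\f$ by splitting both factors into their $ds$-free parts and their $ds$-components, carefully tracking the signs produced when commuting $ds$ past forms of various degrees. The $ds$-free part reduces to $\Wd\W\psip + *_h(\theta\W\psim)\W\psip$, which equals $\theta\W\omega^2$ upon using $\Wd\W\psip = 0$ from Remark \ref{W2W4remark}\,\ref{rempropWd}) and identity \ref{idiii}) of Lemma \ref{su3identities}. For the $ds$-component I would invoke $*_h(\theta\W\psim)\W\omega = \theta\W\psim$ and $J\theta\W\psip = \theta\W\psim$, both from \ref{idi}); the two contributions combine to $f(\Wd\W\omega - \theta\W\psim)\W ds$, matching the formula above. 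For membership in $\Omega^2_{14}(M^7)$, the analogous expansion of $\widetilde\tau\W*_\f\f$ has vanishing $ds$-free part by $\Wd\W\omega^2 = 0$ and identity \ref{idii}), while its $ds$-component, after discarding $\Wd\W\psim = 0$, equals $f\,(*_h(\theta\W\psim)\W\psim)\W ds + f\,(J\theta\W\omega^2)\W ds$, and identity \ref{idiv}) turns the two summands into $2f*_h\theta\W ds$ and $-2f*_h\theta\W ds$, which cancel.

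The main obstacle is purely one of sign bookkeeping: several of the terms above differ only in the order in which $ds$ is moved past a $k$-form, and the cancellations that produce exactly \eqref{tau2WP} (in particular the $\theta\W\psim$ coefficient $1-2=-1$ in the first identity, and the full cancellation in the second) hinge on getting every such sign right. Once the signs are fixed, both verifications are immediate substitutions of the identities of Lemma \ref{su3identities}, and the conclusion follows from the uniqueness of the torsion form.
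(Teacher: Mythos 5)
Your proposal is correct and follows essentially the same route as the paper: both verify that the candidate 2-form satisfies $d_7*_\f\f=\tau\W\f$ and lies in $\Omega^2_{14}(M^7)$ by direct expansion using \eqref{g2strwpstar}, the structure equations \eqref{structueqnsu3}, the algebraic properties of $\Wd$, and points \ref{idi})--\ref{idiv}) of Lemma \ref{su3identities}, then invoke uniqueness of the torsion form. The intermediate expressions you obtain (in particular $d_7*_\f\f=\theta\W\omega^2+f(\Wd\W\omega-\theta\W\psim)\W ds$ and the cancellation $2f*_h\theta\W ds-2f*_h\theta\W ds=0$) match the paper's computation exactly.
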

\begin{proof}
By the uniqueness of $\tau$, it is sufficient to show that the 2-form \eqref{tau2WP} belongs to $\Omega^2_{14}(M^7)$ and satisfies $d_7*_\f\f=\tau\W\f$.  
Using the expression \eqref{g2strwpstar} of $*_\f\f$, the identities $\Wd\W\psim=0$ and $\Wd\W\omega^2=0$, and points \ref{idii}) and \ref{idiv})  
of Lemma \ref{su3identities}, we get
\begin{eqnarray*}
\tau\W*_\f\f 	&=& f*_h(\theta\W\psim)\W\psim\W ds + \frac12*_h(\theta\W\psim)\W\omega^2+fJ\theta\W ds\W \omega^2\\
			&=& -fJ\theta\W\omega^2\W ds + fJ\theta\W\omega^2\W ds \\
			&=&0.
\end{eqnarray*}
This proves that $\tau\in\Omega^2_{14}(M^7)$. From \eqref{structueqnsu3}, we have
\[
d_7*_\f\f = d_6f\W\psim\W ds +f\,\Wd\W\omega\W ds + \theta\W\omega^2.
\]
Now, from the identity $\Wd\W\psip=0$ and points \ref{idi}), \ref{idiii}) of Lemma \ref{su3identities}, we obtain
\begin{eqnarray*}
\tau\W\f 		&=& f\,\Wd\W\omega\W ds + f*_h(\theta\W\psim)\W\omega\W ds +*_h(\theta\W\psim)\W\psip +2\,fJ\theta\W ds\W\psip\\
			&=& f\,\Wd\W\omega\W ds + f\,\theta\W\psim\W ds + \theta\W\omega^2 -2\,f\,\theta\W\psim\W ds\\
			&=& d_7*_\f\f. 
\end{eqnarray*}
\end{proof}
 
Using point \ref{idiii}) of Lemma \ref{su3identities}, the identity \eqref{staralphapsim}, and $\theta\W\omega^2 = d*_h\omega$, 
we obtain the following equivalent expressions of $\tau$:
\[
\tau = \Wd +*_h(\theta\W\psim)-f*_h(\theta\W\omega^2)\W ds = \Wd +\iota_{\theta^\sh}\psip+f\,d^*_6\omega\W ds.
\]
 
By Lemma \ref{HodgestarWP}, the Hodge dual of $\tau$ is
\[
*_\f\tau = f*_h\Wd\W ds + f\,\theta\W\psim\W ds -\theta\W\omega^2.
\] 
As a consequence, the norm $|\tau|_\f^2 	= \left|\Wd\right|_h^2+\left|\theta\W\psim\right|_h^2+\left|\theta\W\omega^2\right|_h^2$ is a function on $M^6$.  
Notice that from point \ref{idiv}) of Lemma \ref{su3identities} we have
\[
\left|\theta\W\psim\right|_h^2\,dV_h = \theta\W\psim\W*_h(\theta\W\psim) = \theta\W(2*_h\theta) = 2\left|\theta\right|_h^2\,dV_h. 
\]
Similarly, using point \ref{idiii}) of the same lemma, we see that 
\begin{equation}\label{twoeqn}
\left|\theta\W\omega^2\right|_h^2=4\left|\theta\right|_h^2. 
\end{equation}
Hence, we get
\begin{equation}\label{normtau2}
|\tau|_\f^2 	= \left|\Wd\right|_h^2+6\left|\theta\right|_h^2.
\end{equation}

Let $\Delta_\f\coloneqq d_7 d_7 ^{*}+d_7^{*} d_7$ denote the Hodge Laplacian on $M^7$ with respect to the Riemannian metric $g_\f$ induced by $\f$. 
We can use Proposition \ref{tauwp} to compute the expression of $\Delta_\f\f$ for a closed warped $\Gd$-structure $\f$. 
First, notice that in such a case $\Delta_\f\f = -d*_\f d*_\f\f = d_7\tau$. Thus, we get
\begin{equation}\label{Lapphi}
\Delta_\f\f = d_6\Wd +d_6(\iota_{\theta^\sh}\psip) +d_6(fd_6^*\omega)\W ds. 
\end{equation}
In the next result, we prove  that some summands appearing in the right-hand side of the above identity can be obtained by applying suitable 
second order differential operators to $\omega$ and to $\psip$. 
In what follows,  $\Delta_h$ denotes the Hodge Laplacian of $h$ on $M^6$. Recall that for every $\beta\in\Omega^k(M^6)$ 
\[
\Delta_h\beta =(d_6d_6^*+d_6^*d_6)\beta = (-d_6*_hd_6*_h-*_hd_6*_hd_6)\beta.
\]
\begin{proposition}\label{propDeltaphiwarp}
Let $\f=f\,\omega\W ds+\psip$ be a closed  warped $\Gd$-structure on $M^6\times\Su,$ where $(\omega,\psip)$ is an $\SU(3)$-structure on $M^6$ fulfilling \eqref{structueqnsu3}. 
Then 
\begin{equation}\label{HLWP}
\Delta_\f\f = \Delta_h\psip +d_6\left(\iota_{\theta^\sh}\psip\right) +f\left(d_6d_6^*\omega+\frac12\,(Jd_6^*\omega)\W d_6^*\omega \right)\W ds.
\end{equation}
In particular, when the warping function $f$ is constant, we have
\[
\Delta_\f\f = \Delta_h\psip.
\]
\end{proposition}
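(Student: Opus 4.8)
The plan is to start from the already-established expression \eqref{Lapphi},
\[
\Delta_\f\f = d_6\Wd +d_6(\iota_{\theta^\sh}\psip) +d_6(fd_6^*\omega)\W ds,
\]
and to match its three summands, one at a time, against those on the right-hand side of \eqref{HLWP}. The middle term $d_6(\iota_{\theta^\sh}\psip)$ appears verbatim in \eqref{HLWP}, so no work is required there. For the first summand I would use point \ref{rempropWd}) of Remark \ref{W2W4remark}, which gives $\Wd=d_6^*\psip$ whenever \eqref{structueqnsu3} holds, together with $d_6\psip=0$ from \eqref{structueqnsu3}; since $\psip$ is closed,
\[
\Delta_h\psip=(d_6d_6^*+d_6^*d_6)\psip=d_6d_6^*\psip=d_6\Wd,
\]
which identifies the first term with $\Delta_h\psip$.

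The third summand is the computational core. Expanding by the Leibniz rule, $d_6(f\,d_6^*\omega)=d_6f\W d_6^*\omega+f\,d_6d_6^*\omega$, so it suffices to establish the identity
\[
d_6f\W d_6^*\omega=\tfrac12\,f\,(Jd_6^*\omega)\W d_6^*\omega.
\]
Here I would substitute $d_6f=-f\theta$ (from $\theta=-d_6\log f$) and $d_6^*\omega=2J\theta$ (point \ref{Leef}) of Remark \ref{W2W4remark}); applying $J$ and using $J^2=-\mathrm{id}$ on $1$-forms gives $Jd_6^*\omega=-2\theta$. Both sides then collapse to $-2f\,\theta\W J\theta$, and wedging with $ds$ produces exactly the third term of \eqref{HLWP}.

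For the final assertion, when $f$ is constant point \ref{fconstiffshf}) of Remark \ref{W2W4remark} shows that $\theta$ vanishes identically; hence $\iota_{\theta^\sh}\psip=0$ and $d_6^*\omega=2J\theta=0$, so the last two terms of \eqref{HLWP} drop out and only $\Delta_h\psip$ survives. I expect the one delicate point to be the third term: pinning down the sign and the numerical factor $\tfrac12$ in the quadratic expression $(Jd_6^*\omega)\W d_6^*\omega$ depends on the precise convention for the action of $J$ on $1$-forms and on the normalization $d_6^*\omega=2J\theta$, so that is where I would be most careful.
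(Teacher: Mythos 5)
Your proposal is correct and follows essentially the same route as the paper: both start from \eqref{Lapphi}, identify $d_6\Wd$ with $\Delta_h\psip$ via $\Wd=d_6^*\psip$ and $d_6\psip=0$, and handle the third summand by the Leibniz rule together with the Lee-form relation $\theta=-\tfrac12\,Jd_6^*\omega$. The only cosmetic difference is that you quote $\Wd=d_6^*\psip$ from Remark \ref{W2W4remark}, whereas the paper re-derives it on the spot from $d_6\psim=\Wd\W\omega$ and the Hodge star identities.
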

\begin{proof}
Consider equation \eqref{Lapphi}.  From $d_6\psim=\Wd\W\omega$, and the identities $\psim=*_h\psip$, $*_h(\Wd\W\omega)=-\Wd$, we have 
$*_hd_6*_h\psip = -\Wd$. Thus, as $d_6\psip=0$, it holds
\[
d_6\Wd = \Delta_h\psip.
\]  
Observe now that $\theta$ is the Lee form of $(\omega,J)$. Hence,  $\theta= -\frac12\,J\,d_6^*\omega$, and we get
\[
d_6(fd_6^*\omega) = d_6f\W d_6^*\omega + fd_6d_6^*\omega = \frac12\,f\,(Jd_6^*\omega)\W d_6^*\omega + fd_6d_6^*\omega.
\]
This proves \eqref{HLWP}. The last assertion follows observing that $f$ is constant if and only if $\theta=0$ and that this happens if and only if $d_6^*\omega=0$. 
\end{proof}

\section{Laplacian flow for closed warped $\G_2$-structures}\label{lapflowgensect}
\subsection{Laplacian flow and Laplacian solitons}
Let $M^7$ be a compact 7-manifold endowed with a closed $\G_2$-structure $\f$. 
The {\em Laplacian flow} starting from $\f$ is the initial value problem 
\begin{equation}\label{LapFlow}
\begin{cases}
\ddt \f(t) = \Delta_{\f(t)}\f(t),\\
d_7 \f(t)=0,\\
\f(0)=\f,
\end{cases}
\end{equation}
where $\Delta_{\f(t)}$ denotes the Hodge Laplacian of the Riemannian metric $g_{\f(t)}$ induced by $\f(t)$. 
In \cite[Thm.~0.1]{BryXu}, it was proved that there exists a unique solution of \eqref{LapFlow} defined for a short time $t\in[0,\varepsilon)$, 
with $\varepsilon$ depending on the initial datum $\f$. 
The evolution equation of $g_{\f(t)}$ can be deduced from that of $\f(t)$ (cf.~\cite{Bry,Kar2}). It is
\begin{equation}\label{metricevtd}
\ddt g_{\f(t)} = -2\,\mbox{Ric}(g_{\f(t)}) -\frac13\,|\tau|_{\f(t)}^2\,g_{\f(t)}-\widetilde{\tau},
\end{equation}
where $|\tau|_{\f}^2 =g_{\f}(\tau,\tau)= \frac12\tau_{ij}g_\f^{ik}g_\f^{jl}\tau_{kl}$, and $\widetilde\tau$ is the symmetric $(2,0)$-tensor defined locally by 
$\widetilde{\tau}_{ij}\coloneqq \tau_{ik}g_\f^{kr}\tau_{rj}$, $g_\f^{kr}$ being the components of the inverse matrix of $g_{\f}$. 
Hence, \eqref{metricevtd} corresponds to the Ricci flow for $g_{\f(t)}$ up to lower order terms. 

Explicit examples of solutions of the Laplacian flow were worked out in \cite{FFM,Lau1,Lau2,Lau3,Nic}.
All of them are obtained on homogeneous spaces, and most of them consist of {\em self-similar solutions} of \eqref{LapFlow}, namely solutions of the form 
\[
\f(t) = \varrho(t)F_t^*\f,
\]
where $F_t\in{\rm Dif{}f}(M^7)$ is a one-parameter family of diffeomorphisms, and $\varrho(t)=\left(1+\frac23\lambda t\right)^{\frac32}$, $\lambda\in\R$, is a scaling factor. 
It can be shown that the solution of the Laplacian flow is self-similar if and only if the initial datum $\f$ satisfies the equation
\begin{equation}\label{LapSol}
\Delta_\f\f = \mathcal{L}_X\f + \lambda\f,
\end{equation}
for some vector field $X$ on $M^7$ (see e.g.~\cite{Lin,LotWei1}). A closed $\G_2$-structure for which \eqref{LapSol} holds is called a {\em Laplacian soliton}. 
Depending on the sign of $\lambda$, a Laplacian soliton is said to be {\em shrinking} ($\lambda<0$), {\em steady} ($\lambda=0$), or {\em expanding} ($\lambda>0$), 
and the corresponding self-similar solution exists on the maximal time interval  
$\left(-\infty,-\frac{3}{2\lambda}\right)$, $(-\infty,+\infty)$, $\left(-\frac{3}{2\lambda},+\infty\right)$, respectively. 
We shall recall some facts about Laplacian solitons on Lie groups in Section \ref{ExSect}, while we refer the reader to \cite{Lau2,Lin,LotWei1} 
for more details on this topic.

\subsection{The Laplacian flow starting from a closed warped G$_{\mathbf 2}$-structure}
From now on, we assume that $M^6$ is a compact, connected 6-manifold endowed with an $\SU(3)$-structure $(\omega,\psip)$. 
Consider a positive function $f\in\mathcal{C}^\infty(M^6)$ and the warped $\G_2$-structure $\f=f\,\omega\W ds+\psip$ on $M^7=M^6\times\Su$. 
When $\f$ is closed, it is natural to ask under which conditions the solution $\f(t)$ of the Laplacian flow \eqref{LapFlow} starting from it remains a closed warped $\G_2$-structure. 
If this happens, then we have
\begin{equation}\label{phitWP}
\f(t) = f(t)\,\omega(t)\W ds+\psip(t),
\end{equation}
for a family of $\SU(3)$-structures $(\omega(t),\psip(t))$ on $M^6$ and a $t$-dependent function $f(t)\in\mathcal{C}^\infty(M^6)$ such that 
$\omega(0)=\omega$, $\psip(0)=\psip$, and $f(0)=f$. 

\begin{notation}
Since the $\SU(3)$-structure $(\omega(t),\psip(t))$ depends on $t$, so does the decomposition of the spaces $\Omega^k(M^6)$. 
We shall emphasize this fact by using the notation $\Omega^k_r(M^6,t)$ for the summands of such decomposition, 
while we reserve the symbol $\Omega^k_r(M^6)$ for $\Omega^k_r(M^6,0)$. 
The tensors defined by the pair $(\omega(t),\psip(t))$ as explained in Section \ref{settingsect} will be denoted by $J_t$, $h(t)$, $\psim(t)$, $dV_{h(t)}$. 
Moreover, we will denote by $d_6^{*_t}$ the formal adjoint of $d_6$ on $\left(M^6,h(t),dV_{h(t)}\right)$. 
\end{notation}

As the warped $\G_2$-structure \eqref{phitWP} is closed, it follows from \eqref{structueqnsu3} that the differential forms $\omega(t)$, $\psip(t)$, $\psim(t)$ satisfy 
\[
\begin{cases}
d_6\omega(t) = \theta(t)\W\omega(t),\\
d_6\psip(t) = 0,\\
d_6\psim(t) = \Wd(t)\W\omega(t),
\end{cases}
\]
where $\theta (t)=-\frac12\,J_t\,d_6^{*_t} \omega (t)=-d_6\log(f (t))$, and $\Wd(t)\in\Omega^2_8(M^6,t)$ for each $t$. 

Let us assume that the solution of \eqref{LapFlow} starting from a warped closed $\G_2$-structure stays warped under the Laplacian flow. 
Following the approach used in \cite{LotSes, Tran} to study the Ricci flow on warped products, 
we can exploit the results of Section \ref{warpedclosedg2sect} to deduce the evolution equations of $\omega(t)$, $\psip(t)$, $h(t)$ and $f(t)$ from the flow of $\f(t)$. 
Let us begin with those of $\omega(t)$ and $\psip(t)$. 

Differentiating both sides of \eqref{phitWP} with respect to $t$, we get
\[
\ddt\f(t) = \left(\ddt\,f(t)\,\omega(t)+f(t)\ddt\omega(t)\right)\W ds+\ddt\psip(t).
\]
Comparing this with the expression \eqref{HLWP} of $\Delta_\f\f$ gives
\begin{equation}\label{omgevo}
\ddt\omega (t) = d_6d_6^{*_t} \omega(t)+\frac12\,\left(J_t d_6^{*_t} \omega (t)\right)\W d_6^{*_t} \omega (t)-\left(\frac{1}{f (t)}\ddt\,f (t)\right)\omega (t),
\end{equation}
and
\begin{equation}\label{psipevo}
\ddt\psip  (t)= \Delta_{h(t)} \psip (t) +d_6\left(\iota_{\theta (t)^{\sharp_{h(t)}}}\psip (t)\right).
\end{equation}

Starting from $\ddt\left(\omega(t)\W\psip(t)\right)=0$, by \eqref{omgevo}, \eqref{psipevo}, \eqref{staralphapsim}, point \ref{idi}) of Lemma \ref{su3identities}, 
and using the identity $d_6^{*_t}\omega(t)=2\,J_t\theta(t)$, we obtain
\begin{eqnarray*}
0 	&=& \ddt\omega(t)\W\psip(t) + \omega(t)\W\ddt\psip(t) \\ 
	&=&	d_6d_6^{*_t} \omega(t)\W\psip(t) + \omega(t)\W\Delta_{h(t)}\psip(t) + \omega(t)\W d_6*_{h(t)}(\theta(t)\W\psim(t))\\
	&=& 2\,d_6(J_t\theta(t)\W\psip(t)) - d_6\omega(t)\W\Wd(t)+d_6(\omega(t)\W *_{h(t)}(\theta(t)\W\psim(t)))\\
	&=& 2\,d_6(\theta(t)\W\psim(t)) - \theta(t)\W d_6\psim(t) + d_6(\theta(t)\W\psim(t))\\
	&=& 4\,d_6\left(\theta(t)\W\psim(t)\right).
\end{eqnarray*}
Consequently, for any $t$ it must hold 
\begin{equation}\label{compflow}
0 = \theta(t)\W d_6\psim(t) = \theta(t)\W\Wd(t)\W\omega(t) = d_6\omega(t)\W\Wd(t). 
\end{equation}

Let us focus now on the evolution equation \eqref{metricevtd} for the metric $g_{\f(t)}=h(t)+[f(t)]^2 ds^2$ induced by $\f(t)$. 
First of all, observe that 
\[
\ddt g_{\f(t)} = \ddt h(t) +2f(t)\ddt f(t)\,ds^2. 
\]
In order to obtain the evolution equations for $h(t)$ and $f(t)$ following the strategy used before, 
we need to  write the right-hand side of \eqref{metricevtd} as the sum of 
a symmetric $(2,0)$-tensor on $M^6$ and a summand of the form $u\,ds^2$, for some function $u\in\mathcal{C}^\infty(M^6)$.  
By standard results on warped product metrics (see e.g.~\cite{On}), the Ricci tensor of $g_\f = h +f^2ds^2$ is
\[
\Ric(g_\f) = \left(\Ric(h) -\frac{1}{f}\,\Hess(f)\right)  + (-f\Delta f )ds^2,
\]
where $\Delta = -\Delta_h$ is the Laplacian acting on functions, and $\Hess$ is the Hessian on $(M^6,h)$. 
Hence
\[
-2\,\Ric(g_\f)-\frac13|\tau|_\f^2\,g_\f =  \left(-2\,\Ric(h)+\frac{2}{f}\,\Hess(f) -\frac13|\tau|_\f^2\,h \right) + \left(2f\Delta f -\frac13|\tau|_\f^2f^2\right)ds^2.
\]
What we need to check is then whether
\[
\widetilde{\tau} = \widetilde{\tau}_6+\widetilde{\tau}_{77}\,ds^2, 
\]
where the first summand is given locally by $\widetilde{\tau}_6 \coloneqq \sum_{i,j=1}^6 \widetilde{\tau}_{ij}dx^idx^j \in \mathcal{S}^2(M^6)$, 
$(x^1,\ldots, x^6)$ being local coordinates on $M^6,$ and $x^7=s$.  
This happens if and only if the following components of $\widetilde{\tau}$ vanish for each $i=1,\ldots,6$:
\begin{equation}\label{wlmetric}
\widetilde{\tau}_{i7} = \widetilde{\tau}_{7i} = \sum_{k,j=1}^6 \tau_{ik}h^{kj}\tau_{j7},
\end{equation}
where $h^{kj}$ are the components of the inverse of $h$ in local coordinates. 
Comparing the above expression with \eqref{tau2WP}, and using point \ref{idiii}) of Lemma \ref{su3identities}, we obtain that \eqref{wlmetric} 
are proportional to the components of the following 1-form
\begin{equation}\label{oneformcontr}
\iota_{\left[f*_h(\theta\W\omega^2)\right]^{\sh}}\left(\Wd+*_h(\theta\W\psim)\right).
\end{equation}
Applying identity \eqref{wedgestarcontraction} to \eqref{oneformcontr}, we get 
\[
*_h\left[f*_h(\theta\W\omega^2)\W\left(*_h\Wd + \theta\W\psim \right) \right].
\]
Using points \ref{idi}) and \ref{idiii}) of Lemma \ref{su3identities}, the identity $*_h\Wd=-\Wd\W\omega$, and recalling that both $\omega$ and $\Wd$ are of type $(1,1)$ with 
respect to $J$, we can rewrite the above 1-form as
\[
2\, f *_hJ\left(\theta\W\Wd\W\omega\right).
\]
Therefore, when we consider the metric induced by the solution \eqref{phitWP} of the Laplacian flow, the components \eqref{wlmetric} of $\widetilde{\tau}$ 
must be zero by \eqref{compflow}.
Summing up,  we have
\[
\ddt h(t) = -2\,\Ric(h(t))+\frac{2}{f(t)}\,\Hess(f(t)) -\frac13\,|\tau|_{\f(t)}^2\,h(t) -\widetilde{\tau}_6(t),
\]
and
\[
\ddt f(t) = \Delta f(t) -\frac16|\tau|_{\f(t)}^2f(t)  -\frac{1}{2f(t)}\widetilde{\tau}_{77}(t). 
\]
Observe that the evolution equations for $h(t)$ and $f(t)$ are both defined on $M^6,$ since by \eqref{normtau2}
\[
|\tau|_{\f(t)}^2 	= \left|\Wd(t)\right|_{h(t)}^2+6\left|\theta(t)\right|_{h(t)}^2,
\]
and by \eqref{twoeqn}
\[
\widetilde{\tau}_{77}(t) = \sum_{i,j=1}^6\tau_{7i}h^{ij}\tau_{j7} = -f(t)^2\left|\theta(t)\W\omega(t)^2\right|_{h(t)}^2 = -4\,f(t)^2\left|\theta(t)\right|_{h(t)}^2, 
\]
We can summarize the previous results in the following. 
\begin{proposition}\label{summinducedflow}
Assume that $\f(t)=f(t)\,\omega(t)\W ds+\psip(t)$ is the solution of the Laplacian flow on $M^6\times\Su$ starting from the closed warped $\G_2$-structure 
$\f=f\,\omega\W ds+\psip$ at $t=0$, where $(\omega,\psip)$ is an $\SU(3)$-structure on $M^6$ satisfying \eqref{structueqnsu3}. 
Then, the evolution of the family of $\SU(3)$-structures $(\omega(t),\psip(t))$ on $M^6$ is described by the equations
\begin{equation}\label{flowsu3t}
\left\{ 
\renewcommand\arraystretch{1.4}
\begin{array}{ccl}
\ddt\omega (t) &=& d_6d_6^{*_t} \omega(t)+\frac12\,(J_t d_6^{*_t} \omega (t))\W d_6^{*_t} \omega (t)-\left(\frac{1}{f (t)}\ddt\,f (t)\right)\omega (t),\\
\ddt\psip  (t) &=& \Delta_{h(t)} \psip (t) +d_6\left(\iota_{\theta (t)^{\sharp_{h(t)}}}\psip (t)\right),
\end{array}
\right.
\end{equation}
where $\theta (t)=-d_6\log(f (t))$, and
\begin{equation}\label{flowf}
\ddt f(t) = \Delta f(t)  +  \left(\left|\theta(t)\right|_{h(t)}^2-\frac16|\Wd(t)|_{h(t)}^2\right) f(t). 
\end{equation}
Moreover, the 1-form $\theta(t)$ satisfies the equation
\[
\theta(t)\W d_6\psim(t)=0.
\]
Finally, the metric $h(t)$ induced by $(\omega(t),\psip(t))$ evolves as follows
\[
\ddt h(t) = -2\,\Ric(h(t))+\frac{2}{f(t)}\,\Hess(f(t)) -\frac13\,|\tau|_{\f(t)}^2\,h(t) -\widetilde{\tau}_6(t).
\]
\end{proposition}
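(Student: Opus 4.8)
The plan is to derive all four evolution equations by substituting the ansatz \eqref{phitWP} into the Laplacian flow \eqref{LapFlow} and matching the base-manifold part against the $ds$-part, exploiting the fact that $\Delta_\f\f$ already splits along $M^6$ and $\Su$ by Proposition \ref{propDeltaphiwarp}. First I would differentiate $\f(t)=f(t)\,\omega(t)\W ds+\psip(t)$ in $t$, obtaining
\[
\ddt\f(t) = \left(\ddt f(t)\,\omega(t)+f(t)\,\ddt\omega(t)\right)\W ds + \ddt\psip(t),
\]
and set this equal to the right-hand side of \eqref{HLWP}. Comparing the terms containing $ds$ yields the evolution of $\omega(t)$ after dividing by $f(t)$ and isolating the conformal term $-\left(\tfrac{1}{f(t)}\ddt f(t)\right)\omega(t)$, while the terms without $ds$ yield the equation for $\psip(t)$; this is exactly the content of \eqref{omgevo}--\eqref{psipevo} that makes up the system \eqref{flowsu3t}.

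The next step is to check that the pair $(\omega(t),\psip(t))$ stays compatible, i.e.\ that $\omega(t)\W\psip(t)=0$ is preserved. Differentiating this identity and inserting \eqref{omgevo}, \eqref{psipevo} together with \eqref{staralphapsim}, point \ref{idi}) of Lemma \ref{su3identities}, and the relation $d_6^{*_t}\omega(t)=2J_t\theta(t)$, I expect the whole expression to collapse to a multiple of $d_6\!\left(\theta(t)\W\psim(t)\right)$, forcing \eqref{compflow} and in particular $\theta(t)\W d_6\psim(t)=0$. I anticipate this to be the most delicate computation, since it requires the $\SU(3)$ Hodge-star identities to conspire so that all surviving summands combine into a single exact $3$-form.

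The final and most technically demanding part concerns the metric. Here I would start from \eqref{metricevtd} and use the standard warped-product formula for $\Ric(g_\f)$ to split $-2\Ric(g_\f)-\tfrac13|\tau|_\f^2\,g_\f$ into an $M^6$-part and a $ds^2$-part. The obstruction is the lower-order term $\widetilde\tau$: to preserve the warped structure one must verify that its mixed components $\widetilde\tau_{i7}$ vanish. Using \eqref{tau2WP} and point \ref{idiii}) of Lemma \ref{su3identities}, these components are proportional to the $1$-form \eqref{oneformcontr}, which after applying \eqref{wedgestarcontraction}, the identity $*_h\Wd=-\Wd\W\omega$, and the $(1,1)$-type of $\omega$ and $\Wd$ reduces to $2f*_hJ\!\left(\theta\W\Wd\W\omega\right)$; this is precisely the quantity shown to vanish in \eqref{compflow}. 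Once the mixed components are gone, the $ds^2$-part gives the evolution of $f(t)$, and computing $\widetilde\tau_{77}$ via \eqref{twoeqn} together with the norm formula \eqref{normtau2} produces \eqref{flowf}, while the residual symmetric tensor on $M^6$ yields the stated equation for $h(t)$. Throughout, I expect the main obstacle to be the careful bookkeeping of the $\SU(3)$ identities of Lemma \ref{su3identities}, both to force the cancellation of the mixed metric components and to confirm that $|\tau|_{\f(t)}^2$ and $\widetilde\tau_{77}(t)$ descend to functions on $M^6$.
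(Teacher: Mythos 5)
Your proposal is correct and follows essentially the same route as the paper: matching the $t$-derivative of the warped ansatz against the split expression \eqref{HLWP} to obtain \eqref{flowsu3t}, differentiating the compatibility condition to force $d_6\bigl(\theta(t)\W\psim(t)\bigr)=0$ and hence $\theta(t)\W d_6\psim(t)=0$, and then using the warped-product Ricci formula together with the vanishing of the mixed components $\widetilde{\tau}_{i7}$ (reduced via Lemma \ref{su3identities} to $2f*_hJ(\theta\W\Wd\W\omega)$) to split the metric evolution into the stated equations for $h(t)$ and $f(t)$. No gaps.
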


By reversing the argument, it is possible to check that the existence of a family of $\SU(3)$-structures $(\omega(t),\psip(t))$ satisfying \eqref{flowsu3t} 
and a $t$-dependent function $f(t)$ solving \eqref{flowf} implies that the family of closed warped $\G_2$-structures $\f(t) = f(t)\,\omega(t)\W ds+\psip(t)$ 
is the solution of the Laplacian flow on $M^6\times\Su$ starting from $\f(0) = f\,\omega\W ds+\psip$. 
However, it seems difficult to establish whether solutions of an initial value problem for the evolution equations \eqref{flowsu3t} and \eqref{flowf} always exist.

\section{Laplacian flow on Riemannian product manifolds}\label{lapflowproduct}
In this section, we assume that the warping function $f$ in \eqref{g2strwp} is identically constant,  $f\equiv a\in\R_\+$. 
Under this hypothesis, $(M^7=M^6\times\Su,h+a^2ds^2)$ is a Riemannian product manifold, and the 3-form $\f=a\,\omega\W ds+\psip$ defines a closed $\G_2$-structure on it 
if and only if the $\SU(3)$-structure $(\omega,\psip)$ is symplectic half-flat. 
Recall that, in such a case, 
\[
d_6\omega=0,\quad d_6\psip=0,\quad d_6\psim=\Wd\W\omega,
\]
where $\Wd\in\Omega^2_8(M^6)$. 
The intrinsic torsion form $\tau\in\Omega^2_{14}(M^7)$ of the closed $\G_2$-structure coincides with $\Wd$, 
and its norm is proportional to the scalar curvature of the metric $h$ induced by $(\omega,\psip)$. 
Indeed, $|\tau|_{\f}^2 =\left|\Wd\right|^2_h$ (cf.~\eqref{normtau2}), and 
\begin{equation}\label{ScalSHF}
\mbox{Scal}(h) = -\frac12\left|\Wd\right|^2_h,
\end{equation}
by \cite[Thm.~3.4]{BedVez}. 
In the next lemma, we collect some useful properties of $\Wd$. 
\begin{lemma}\label{dwdexpr}
Let $(\omega,\psip)$ be a symplectic half-flat $\SU(3)$-structure. Then, the intrinsic torsion form $\Wd$ is coclosed, and its exterior derivative has the following expression
\begin{equation}\label{dWdexpr}
d_6\Wd = \frac{\left|\Wd\right|_h^2}{4}\psip +\gamma,
\end{equation}
for some $\gamma\in\Omega^3_{12}(M^6)$.  
As a consequence, the following identities hold
\begin{equation}\label{iddw2}
\begin{array}{rcl}
d_6\Wd \wedge \psip  &=& *_h d_6\Wd \wedge \psim  = 0, \\
d_6 \Wd \wedge \psim  &=&  \psip \W*_h d_6\Wd =  \left| \Wd \right|_h^2 *_h1.
\end{array}
\end{equation}
Moreover, if $\gamma$ vanishes identically, then $|\Wd|_h$ is constant.  
\end{lemma}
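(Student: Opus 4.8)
The plan is to establish coclosedness first, then to read off the three components of $d_6\Wd$ in the $\SU(3)$-decomposition \eqref{3formdec6}, and finally to extract the two consequences. Throughout I would lean on the algebraic identities for $\Wd$ recorded in Remark \ref{W2W4remark}, item \ref{rempropWd} (in particular $\Wd\W\psi_{\scriptscriptstyle\pm}=0$, $\Wd\W\omega^2=0$, $\Wd\W\omega=-*_h\Wd$, and $\Wd=d_6^*\psip$), together with the normalization $\psip\W\psim=\tfrac23\omega^3=4\,dV_h$ coming from \eqref{normcond} and \eqref{starSU3}. For coclosedness itself the quickest route is $\Wd=d_6^*\psip$: since $(d_6^*)^2=0$, we get $d_6^*\Wd=(d_6^*)^2\psip=0$ at once. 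I would also record the equivalent reformulation that I need afterwards: from $*_h\Wd=-\Wd\W\omega$ and $d_6\omega=0$ one has $d_6^*\Wd=-*_h d_6 *_h\Wd=*_h(d_6\Wd\W\omega)$, so coclosedness is the same as the statement $d_6\Wd\W\omega=0$.

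Next I would decompose $d_6\Wd=a\,\psip+b\,\psim+\gamma+\alpha\W\omega$ along \eqref{3formdec6}, with $a,b\in\mathcal{C}^\infty(M^6)$, $\gamma\in\Omega^3_{12}(M^6)$, $\alpha\in\Omega^1(M^6)$, and pin down each piece by wedging with a suitable test form. Wedging with $\omega$ and using $\psi_{\scriptscriptstyle\pm}\W\omega=0$ and $\gamma\W\omega=0$ leaves $d_6\Wd\W\omega=\alpha\W\omega^2$, which vanishes by coclosedness; since $\alpha\mapsto\alpha\W\omega^2$ is injective on $1$-forms, $\alpha=0$. Wedging with $\psip$, the relations $\Wd\W\psip=0$ and $d_6\psip=0$ give $d_6\Wd\W\psip=d_6(\Wd\W\psip)=0$, while the decomposition gives $d_6\Wd\W\psip=b\,\psim\W\psip=-4b\,dV_h$; hence $b=0$. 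Finally, wedging with $\psim$ and using $\Wd\W\psim=0$ yields $d_6\Wd\W\psim=-\Wd\W d_6\psim=-\Wd\W\Wd\W\omega=\Wd\W *_h\Wd=|\Wd|_h^2\,dV_h$, which matched against $a\,\psip\W\psim=4a\,dV_h$ forces $a=\tfrac14|\Wd|_h^2$. This is \eqref{dWdexpr}.

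The identities \eqref{iddw2} are then bookkeeping on top of \eqref{dWdexpr}, provided one knows that $*_h$ preserves $\Omega^3_{12}(M^6)$ (true because $*_h$ commutes with the $\SU(3)$-action and $\Omega^3_{12}$ is an isotypic summand), so that $*_h\gamma$, like $\gamma$, annihilates $\psi_{\scriptscriptstyle\pm}$ under wedge. Then $d_6\Wd\W\psip=0$ and $*_h d_6\Wd\W\psim=(\tfrac14|\Wd|_h^2\,\psim+*_h\gamma)\W\psim=0$ give the first line, while $d_6\Wd\W\psim=|\Wd|_h^2\,dV_h$ (computed above) and $\psip\W *_h d_6\Wd=\langle\psip,d_6\Wd\rangle_h\,dV_h=\tfrac14|\Wd|_h^2\,|\psip|_h^2\,dV_h=|\Wd|_h^2\,dV_h$, using $|\psip|_h^2=4$, give the second.

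For the last claim I would assume $\gamma=0$, so that $d_6\Wd=\tfrac14|\Wd|_h^2\,\psip$, and apply $d_6$ once more: $d_6^2=0$ and $d_6\psip=0$ leave $d_6\!\left(|\Wd|_h^2\right)\W\psip=0$. The main obstacle is precisely this final step, namely concluding that the $1$-form $d_6(|\Wd|_h^2)$ itself vanishes rather than merely being annihilated by $\psip$; for this I would invoke the injectivity of $\eta\mapsto\eta\W\psip$ on $1$-forms, which follows from $*_h(\eta\W\psip)=-\iota_{\eta^\sh}\psim$ (a case of \eqref{wedgestarcontraction}) and the non-degeneracy of the stable form $\psim$. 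Connectedness of $M^6$ then gives that $|\Wd|_h$ is constant.
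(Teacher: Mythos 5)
Your proposal is correct and follows essentially the same route as the paper: coclosedness from $\Wd=d_6^*\psip$, decomposition of $d_6\Wd$ along \eqref{3formdec6} with each component killed by wedging against $\omega$, $\psip$, $\psim$, and the final claim via injectivity of $\eta\mapsto\eta\W\psip$ on $1$-forms. The only cosmetic difference is that the paper gets $d_6\Wd\W\omega=0$ directly from $d_6(d_6\psim)=0$ rather than from $d_6^*\Wd=0$ via $*_h\Wd=-\Wd\W\omega$, and it leaves the bookkeeping for \eqref{iddw2} implicit where you spell it out.
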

\begin{proof}
In point \ref{rempropWd}) of Remark \ref{W2W4remark}, we observed that $\Wd=d_6^*\psip$. Hence, the 2-form $\Wd$ is clearly coclosed. 
By the decomposition \eqref{3formdec6} of $\Omega^3(M^6)$, there exist unique functions $k^+,k^-\in\mathcal{C}^\infty(M^6)$ and differential forms $\gamma\in\Omega^3_{12}(M^6)$, 
$\beta\in\Omega^1(M^6)$ such that 
\[
d_6\Wd = k^+\psip+k^-\psim+\gamma+\beta\W\omega.
\]
Since $d_6\Wd\W\omega = d_6(d_6\psim)=0$, we have $\beta\W\omega^2=0$, which implies $\beta=0$. 
Differentiating the identity $\Wd\W\psip=0$, we get $k^-=0$. 
Finally, taking the exterior derivative of $\Wd\W\psim=0$, we obtain
\[
0 = d_6\Wd\W\psim + \Wd\W\Wd\W\omega = k^+\psip\W\psim -\Wd\W*_h\Wd = \left(\frac23\,k^+ -\frac16 \left|\Wd \right|^2_h\right)\omega^3,
\]
from which the second assertion follows. 
When $\gamma\equiv0$, we have 
\[
0 = d_6(d_6\Wd)= \frac{1}{4}d_6\left|\Wd\right|_h^2\W\psip.
\]
Since wedging 1-forms by $\psip$ is injective, this implies that $\left|\Wd\right|_h$ is constant. 
\end{proof}

\begin{remark}
Note that the identity \eqref{dWdexpr} also holds for the warped solution $\f(t)=f(t)\,\omega(t)\W ds+\psip(t)$ of the Laplacian flow even when $f(t)$ is not constant. 
This is an easy consequence of \eqref{compflow}. 
\end{remark}

Using the results of the previous sections, we immediately obtain the following. 
\begin{proposition}\label{propflowprod}
Assume that $\f(t)=f(t)\,\omega(t)\W ds+\psip(t)$ is the solution of the Laplacian flow on $M^6\times\Su$ starting from the closed $\G_2$-structure $\f=a\,\omega\W ds+\psip$ at $t=0$. 
Then, the function $f(t)$ is constant on $M^6$ for each $t$, i.e., $d_6f(t)=0$, if and only if the $\SU(3)$-structure $(\omega(t), \psip(t))$ is symplectic half-flat. 
In this case, the evolution equations are
\begin{equation}\label{evsysfcnst}
\renewcommand\arraystretch{1.4}
\left\{
\begin{array}{l}
\ddt\omega(t) 	= -\left(\frac{1}{f (t)}\frac{\mathrm d}{\mathrm d t}f (t)\right)\omega (t),\\
\ddt\psip(t) 	= \Delta_{h(t)}\psip(t),
\end{array}
\right.
\end{equation}
and
\begin{equation}\label{evolutionf}
\frac{\mathrm d}{\mathrm d t} f(t) =  -\frac16\, \left|d_6^{*_t}\psip(t)\right|_{h(t)}^2\,f(t). 
\end{equation}
In particular, \eqref{evolutionf} implies that $\left|d_6^{*_t}\psip(t)\right|^2_{h(t)}= -2\,{\rm Scal}(h(t))$ is constant for each $t$. 
\end{proposition}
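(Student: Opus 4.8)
The plan is to obtain everything by specialising the coupled flow of Proposition~\ref{summinducedflow} to the case $\theta(t)\equiv 0$, reading off each assertion from the general equations \eqref{flowsu3t} and \eqref{flowf}; throughout I use the standing hypothesis that $\f(t)$ remains closed and warped, so that \eqref{structueqnsu3} holds at every time and Proposition~\ref{summinducedflow} applies. I would first settle the equivalence, which rests on the identities $\theta(t)=-d_6\log f(t)$, $d_6\omega(t)=\theta(t)\W\omega(t)$, and $d_6^{*_t}\omega(t)=2J_t\theta(t)$, the last by point~\ref{Leef}) of Remark~\ref{W2W4remark}. Since $M^6$ is connected, $d_6 f(t)=0$ is equivalent to $\theta(t)=0$. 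If $\theta(t)=0$, then $d_6\omega(t)=0$, and as $d_6\psip(t)=0$ always holds by \eqref{structueqnsu3}, the pair $(\omega(t),\psip(t))$ is symplectic half-flat. Conversely, symplectic half-flatness gives $d_6\omega(t)=0$, that is $\theta(t)\W\omega(t)=0$; wedging once more with $\omega(t)$ and using point~\ref{idiii}) of Lemma~\ref{su3identities} (so that $\theta(t)\W\omega(t)^2=2*_{h(t)}(J_t\theta(t))$) forces $\theta(t)=0$, hence $d_6 f(t)=0$. This is the time-dependent analogue of point~\ref{fconstiffshf}) of Remark~\ref{W2W4remark}.

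Assuming now $\theta(t)=0$, I would substitute directly into \eqref{flowsu3t} and \eqref{flowf}. From $d_6^{*_t}\omega(t)=2J_t\theta(t)=0$ the first two terms of the $\omega$-equation vanish, leaving $\ddt\omega(t)=-\big(f(t)^{-1}\,\ddt f(t)\big)\,\omega(t)$, while $\iota_{\theta(t)^\sharp}\psip(t)=0$ kills the correction term of the $\psip$-equation, leaving $\ddt\psip(t)=\Delta_{h(t)}\psip(t)$; these are the two equations in \eqref{evsysfcnst}. In \eqref{flowf} the term $\Delta f(t)$ vanishes because $f(t)$ is now spatially constant, the $|\theta(t)|^2$ term vanishes, and using $\Wd(t)=d_6^{*_t}\psip(t)$, by point~\ref{rempropWd}) of Remark~\ref{W2W4remark}, one recovers \eqref{evolutionf}.

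For the final assertion I would observe that, in this case, $f(t)$ is a positive constant on $M^6$ for each fixed $t$, hence so is $\ddt f(t)$; dividing \eqref{evolutionf} by $f(t)\neq 0$ then shows that $|d_6^{*_t}\psip(t)|_{h(t)}^2=-6\,f(t)^{-1}\,\ddt f(t)$ is spatially constant, and the pointwise identity $|d_6^{*_t}\psip(t)|_{h(t)}^2=|\Wd(t)|_{h(t)}^2=-2\,\Scal(h(t))$ coming from \eqref{ScalSHF} completes the argument. The proposition is essentially a specialisation of Proposition~\ref{summinducedflow}, so no serious obstacle arises; the only points deserving care are the injectivity step $\theta(t)\W\omega(t)=0\Rightarrow\theta(t)=0$ in the converse of the equivalence, and the transfer of spatial constancy from $f(t)$ to $\ddt f(t)$ in the last claim.
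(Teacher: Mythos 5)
Your proposal is correct and follows essentially the same route as the paper, which simply cites point~v) of Remark~\ref{W2W4remark} for the equivalence and Proposition~\ref{summinducedflow} for the specialised evolution equations; you have merely written out the details (including the injectivity of wedging with the symplectic form, via Lemma~\ref{su3identities}~iii), that the paper leaves implicit). No gaps.
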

\begin{proof}
The assertion follows from point \ref{fconstiffshf}) of Remark \ref{W2W4remark}, and from Proposition \ref{summinducedflow}.
\end{proof}

Observe that the solution of the first equation in \eqref{evsysfcnst} is 
\begin{equation}\label{omgt}
\omega(t) = \exp\left(-\int_0^t\frac{\mathrm d}{\mathrm d r}\log(f(r))dr\right)\omega(0) = \frac{f(0)}{f(t)}\,\omega(0). 
\end{equation}
Consequently, the 2-form $\omega(t)$ is stable and closed for each $t$, and it is completely determined once that $f(t)$ is known. 

Let us now focus on the evolution equations for the function $f(t)$ and the 3-form $\psip(t)$. Changing our point of view, 
it would be interesting to study the short-time existence and uniqueness of the solution for the problem
\begin{equation}\label{systcompletefomgpsip}
\begin{cases}
\ddt\psip(t) 	= \Delta_{h(t)}\psip(t),\\
\omega(t)\W\psip(t)=0,\\
\frac{\mathrm d}{\mathrm d t} f(t) =  \frac{1}{3}\,{\rm Scal}(h(t))f(t),\\
\psip(0)=\psip,\\
f(0) = a,
\end{cases}
\end{equation}
where $(\omega,\psip)$ is a symplectic half-flat $\SU(3)$-structure whose associated metric has constant scalar curvature, 
$\omega(t)$ is given by \eqref{omgt} with $\omega(0)=\omega$, and $a\in\R_\+$. 
Since $\psip$ is a stable closed 3-form, it would be desirable that these properties were satisfied also by a possible solution $\psip(t)$. 
If this happens, then we would get a family of symplectic half-flat $\SU(3)$-structures $(\omega(t),\psip(t))$ on $M^6$ together with a function $f(t)$ such that 
the family of closed $\G_2$-structures $\f(t) = f(t)\,\omega(t)\W ds+\psip(t)$ on $M^6\times\Su$ 
is the solution of the Laplacian flow starting from the closed $\G_2$-structure $\f(0)=a\,\omega\W ds+\psip$. 
Following the general approach outlined in \cite[Sect.~3.1]{BedVez2}, it seems natural to focus on the problem 
\begin{equation}\label{psipflow}
\renewcommand\arraystretch{1.4}
\left\{
\begin{array}{l}
\ddt\psip(t) 	= \Delta_{h(t)}\psip(t),\\
\omega(t)\W\psip(t)=0,\\
d_6\psip(t)=0,\\
\psip(0)=\psip,
\end{array}
\right.
\end{equation}
and look for solutions in the space
\[
\mathcal{O}\coloneqq \left\{\psip+d_6\beta\st\beta\in\Omega^2(M^6),~d_6\beta\W\omega=0 \right\}\cap\Omega_\+^3(M^6).
\]
In this way, the definition of the Hodge Laplacian with respect to the metric $h(t)$ induced by $\omega(t) = \frac{a}{f(t)}\,\omega$ and $\psip(t)\in\mathcal{O}$ makes sense, 
and one can investigate whether it is possible to prove short-time existence and uniqueness of the solution of \eqref{psipflow} using \cite[Thm.~3.2]{BedVez2}. 

In the remainder of this section, we show that there exist a solution of \eqref{psipflow} belonging to $\mathcal{O}$ when the initial datum $(\omega,\psip)$ 
is a symplectic half-flat $\SU(3)$-structure satisfying some additional properties, while we reserve the study of the general problem for future work. 
Let us begin considering an explicit example. 
\begin{example}\label{firstexsolv}
Let $\frg\coloneqq A_{5,7}^{-1,-1,1}\oplus\R$ be the six-dimensional, decomposable, solvable Lie algebra with structure equations 
\[
(e^{15},-e^{25},-e^{35},e^{45},0,0).
\]
Recall that the above notation means that there exists a basis $\{e_1,\ldots,e_6\}$ of $\frg$ whose dual basis $\{e^1,\ldots,e^6\}$ satisfies
\[
de^1=e^{15},~de^2=-e^{25},~de^3=-e^{35},~de^4=e^{45},~de^5=0,~de^6=0, 
\]
where $d$ is the Chevalley-Eilenberg differential of $\frg$, and $e^{ij\cdots}$ is a shorthand for the wedge product $e^i\W e^j\W \cdots$.
It is known (see \cite{FMOU,FrSchD}) that on $\frg$ there exists a symplectic half-flat $\SU(3)$-structure defined by the differential forms 
\[
\omega = -e^{13}+e^{24}+e^{56},\quad \psip = -e^{126}-e^{145}-e^{235}-e^{346}.
\]
The almost complex structure $J$ induced by $\psip$ and the volume form $\frac{\omega^3}{6}$ is the following
\[
Je_1=-e_3,\quad Je_2=e_4,\quad Je_5=e_6,
\]
the inner product associated with $(\omega,\psip)$ is $h = \sum_{i=1}^6(e^i)^2$, and the imaginary part of the complex volume form $\Psi=\psip+i\psim$ is
\[
\psim = e^{125}-e^{146}-e^{236}+e^{345}.
\]
Consequently, $\Wd = 2\,e^{14}-2\,e^{23}.$
Notice that $\Delta_h\Wd = 4\,\Wd$, and that $\left|d_6\Wd\right|_h^2 = 4\left|\Wd\right|_h^2$. 

Let $f(t)$ be a positive function such that $f(0)=a\in\R_{\scriptscriptstyle+}$, and let $\omega(t)=\frac{a}{f(t)}\,\omega$. 
We consider the following Ansatz for the solution of \eqref{psipflow}:
\[
\psip(t) = \psip + k(t)\,d\Wd,
\]
where $k(t)$ is an unknown function satisfying $k(0)=0$. Clearly, $\psip(t)$ is closed, and $\omega(t)\W\psip(t)=0$.  
Moreover, $\psip(t)$ is stable for each value of $k(t)$ but $-\frac14$, since
\[
P(\psip(t)) = -4(1+4k(t))^2. 
\]
Therefore, the pair $(\omega(t),\psip(t))$ defines a family of symplectic half-flat $\SU(3)$-structures 
provided that the symmetric bilinear form $h(t)\coloneqq \omega(t)(\cdot,J_t\cdot)$ is positive definite, the normalization condition is satisfied, and $k(t)\neq-\frac14$. 
The matrix associated with $h(t)$ with respect to the basis  $\{e_1,\ldots,e_6\}$ is 
\[
\frac{1+4k(t)}{|1+4k(t)|}\frac{a}{f(t)}\,\mbox{diag}\left(1,1,1,1,1+4k(t),\frac{1}{1+4k(t)}\right). 
\]
Since both $a$ and $f(t)$ are positive, it follows that $h(t)$ is positive definite if and only if $1+4k(t)>0$. 
We can determine explicitly the expression of $\psim(t)$, obtaining
\[
\psim(t) = (1+4k(t))e^{125}-e^{146}-e^{236}+(1+4k(t))e^{345}.
\] 
Imposing the normalization condition \eqref{normcond}, we get
\[
k(t) = -\frac14\frac{f(t)^3-a^3}{f(t)^3}.
\]
Now, we can compute the expression of $\Wd(t)$ and its norm:
\[
\Wd(t) = 2\,\frac{f(t)}{a}e^{14} -2\,\frac{f(t)}{a}e^{23},\qquad \left|\Wd(t)\right|^2_{h(t)} = 8\,\frac{f(t)^4}{a^4}.
\]
From the equation $\ddt\psip(t) = \Delta_{h(t)}\psip(t) = d\Wd(t)$, which is now an ODE, and the identity relating $k(t)$ and $f(t)$, 
we deduce that $f(t)$ has to solve the Cauchy problem
\[
\begin{cases}
\frac{\mathrm d}{{\mathrm d}t} f(t) = -\frac{4}{3}\frac{f(t)^5}{a^4},\\
f(0)=a.
\end{cases}
\]
A standard computation gives
\[
f(t) = a\left(\frac{16}{3}t+1\right)^{-\frac14}.
\]
Observe that the same ODE for $f(t)$ is obtained considering the equation \eqref{evolutionf}.

Summing up, the family of symplectic half-flat $\SU(3)$-structures $(\omega(t),\psip(t))$ starting from $(\omega,\psip)$ at $t=0$ and solving the evolution equations \eqref{evsysfcnst} 
is given by
\begin{eqnarray*}
\omega(t) 	&=& \left(\frac{16}{3}t+1\right)^{\frac14}(-e^{13}+e^{24}+e^{56}),\\
\psip(t)	&=& -e^{126}-\left(\frac{16}{3}t+1\right)^{\frac34}e^{145}-\left(\frac{16}{3}t+1\right)^{\frac34}e^{235}-e^{346},\qquad t\in\left(-\frac{3}{16},+\infty\right).
\end{eqnarray*}
Moreover, the product algebra $\frg\oplus\R e_7$ admits a family of closed $\G_2$-structures 
\[
\f(t) \coloneqq f(t)\,\omega(t)\W e^7 +\psip(t) = a\, \omega\W e^7 + \psip(t), 
\]
which is the solution of the Laplacian flow starting from $\f(0) = a\,\omega\W e^7+\psip$. 
\end{example}

In the light of the previous example, we may investigate whether there exist some conditions on the initial datum $(\omega,\psip)$ guaranteeing that the Ansatz 
$\psip(t)=\psip+k(t)d_6\Wd$ for the solution of \eqref{psipflow} is valid in greater generality. 
The next result gives a positive answer. Moreover, we shall see in Section \ref{ExSect} that there are many examples where it applies. 

\begin{theorem}\label{thmlapshf}
Let $M^6$ be a compact, six-dimensional manifold endowed with a symplectic half-flat $\SU(3)$-structure $(\omega,\psip)$. 
Assume that both $\left|\Wd\right|_h$ and $\left|d_6\Wd\right|_h$ are constant, that $d_6\Wd\W\Wd=0$, 
and that there exists a positive real number $c$ such that
\[
\Delta_h\Wd = c\,\Wd. 
\]
Then, $c \geq\frac{\left| \Wd \right|_h^2}{4}$, and for any real constant $a>0$, the solution of \eqref{systcompletefomgpsip} satisfying $f(0)=a$ and 
$(\omega(0),\psip(0))=(\omega,\psip)$ is given by the function
\[
f(t) = a\left(\frac{6c-|\Wd|_h^2}{3}\, t+1 \right)^{\frac{|\Wd|_h^2}{2|\Wd|_h^2-12c}},
\]
and the family of symplectic half-flat $\SU(3)$-structures on $M^6$
\[
\omega(t) = \frac{a}{f(t)}\,\omega,\qquad \psip(t) = \psip  + k(t)\,d_6\Wd,
\]
where
\[
k(t) = \frac{1}{c}\left[\left(\frac{a}{f(t)}\right)^{\frac{6c}{|\Wd|_h^2}}-1 \right].
\]
Consequently, the solution of the Laplacian flow
\[
\begin{cases}
\ddt \f(t) = \Delta_{\f(t)}\f(t),\\
d_7 \f(t)=0,\\
\f(0)=a \,\omega \W ds+\psip,
\end{cases}
\]
on the compact 7-manifold $M^6\times\Su$ is given by
\[
\f(t)=f(t)\,\omega(t)\W ds+\psip(t),
\]
and its maximal interval of existence is $\left(T,+\infty\right)$, where $T\coloneqq \frac{3}{|\Wd|_h^2-6\,c}<0$. 
\end{theorem}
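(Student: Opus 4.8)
The plan is to verify that the stated pair $(\omega(t),\psip(t))$ and function $f(t)$ solve the reduced system \eqref{systcompletefomgpsip}, the genuine difficulty being the identification of the $\SU(3)$-data induced along the Ansatz. Write $W\coloneqq|\Wd|_h^2$ for brevity. I would first record the algebraic consequences of the hypotheses. By Lemma~\ref{dwdexpr} we have $d_6\Wd=\tfrac{W}{4}\psip+\gamma$ with $\gamma\in\Omega^3_{12}(M^6)$; since $\Wd=d_6^*\psip$ is coclosed, $\Delta_h\Wd=d_6^*d_6\Wd$, and applying $d_6^*$ to this decomposition gives $d_6^*\gamma=\bigl(c-\tfrac{W}{4}\bigr)\Wd$. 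Because $\gamma\perp\psip$ in the splitting \eqref{3formdec6} we have $\langle\gamma,d_6\Wd\rangle=|\gamma|_h^2$, so pairing $d_6^*\gamma$ with $\Wd$ and integrating (all norms being constant) yields $\bigl(c-\tfrac{W}{4}\bigr)W=|\gamma|_h^2\ge0$, whence the asserted inequality $c\ge\tfrac{W}{4}$, with equality exactly when $\gamma\equiv0$.

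Next I would set $\omega(t)=\tfrac{a}{f(t)}\omega$ and $\psip(t)=\psip+k(t)d_6\Wd$ with $k(0)=0$. Closedness is immediate from $d_6\psip=0$ and $d_6d_6\Wd=0$, while the compatibility $\omega(t)\W\psip(t)=0$ reduces to $\omega\W d_6\Wd=d_6(d_6\psim)=0$. It then remains to check that $(\omega(t),\psip(t))$ is a bona fide symplectic half-flat $\SU(3)$-structure, to compute its intrinsic torsion form $\Wd(t)$, and to solve the flow; the crux is to express the stability invariant $P(\psip(t))$ and $\psim(t)=J_{\psip(t)}\psip(t)$ explicitly as functions of $k$.

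This last computation is the main obstacle, since $J_{\psip(t)}$---and hence $\psim(t)$, the metric $h(t)$ and $\Wd(t)$---depends \emph{nonlinearly} on $k$. The point is that the three extra hypotheses force the deformation to stay diagonal: using $d_6^*\gamma=(c-\tfrac{W}{4})\Wd$, the constancy of $|\Wd|_h$ and $|d_6\Wd|_h$, and $d_6\Wd\W\Wd=0$ (equivalently $\gamma\W\Wd=0$), I expect to show that the quartic $k\mapsto P(\psip+k\,d_6\Wd)$ collapses to the perfect power $P(\psip(t))=-4(1+ck)^{W/c}$---necessarily of integer degree $W/c\in\{1,\dots,4\}$, consistently with $c\ge\tfrac{W}{4}$---and that the torsion is rescaled, $\Wd(t)=(a/f)^{\,2-6c/W}\Wd$, with $|\Wd(t)|_{h(t)}^2=W\,(a/f)^{\,2-12c/W}$. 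Granting this, the normalization \eqref{normcond}, which equates $\psip(t)\W\psim(t)$ with $\tfrac23\omega(t)^3$ and hence forces $|P(\psip(t))|=4(a/f)^6$, yields at once the relation $(a/f)^{6c/W}=1+ck$, i.e.\ the stated $k(t)=\tfrac1c\bigl[(a/f)^{6c/W}-1\bigr]$; positivity of $h(t)$ and stability of $\psip(t)$ amount to $1+ck=(a/f)^{6c/W}>0$, which holds throughout.

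Finally I would assemble the flow. With $\Wd(t)$ in hand, the $\psip$-equation of \eqref{evsysfcnst} reads $\dot k\,d_6\Wd=\Delta_{h(t)}\psip(t)=d_6\Wd(t)=(a/f)^{2-6c/W}d_6\Wd$, giving $\dot k=(a/f)^{2-6c/W}$, while \eqref{evolutionf} reads $\dot f=-\tfrac{W}{6}(a/f)^{2-12c/W}f$. Setting $s\coloneqq a/f$ and using the relation above, both collapse to the single separable ODE $\dot s=\tfrac{W}{6}\,s^{\,3-12c/W}$, whose solution with $s(0)=1$ is $s^{\,12c/W-2}=\tfrac{6c-W}{3}t+1$; this is precisely the stated $f(t)$, and $k(t)$ follows. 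By the converse direction of the correspondence in Proposition~\ref{propflowprod}, the closed $\G_2$-structure $\f(t)=f(t)\,\omega(t)\W ds+\psip(t)$ then solves the Laplacian flow, and by the short-time uniqueness of \cite{BryXu} it is \emph{the} solution starting from $a\,\omega\W ds+\psip$. Its maximal interval is the set where $f$ is positive and the structure non-degenerate, i.e.\ $\tfrac{6c-W}{3}t+1>0$, namely $(T,+\infty)$ with $T=\tfrac{3}{W-6c}<0$.
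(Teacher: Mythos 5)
Your opening and closing are sound: the derivation of $c\ge \tfrac{|\Wd|_h^2}{4}$ via $d_6^*\gamma=(c-\tfrac{|\Wd|_h^2}{4})\Wd$ and integration by parts is a correct (and essentially equivalent) variant of the paper's argument, the verification that $\psip(t)=\psip+k(t)\,d_6\Wd$ is closed and compatible with $\omega(t)$ is right, and the final reduction of the two ODEs to $\dot s=\tfrac{W}{6}s^{3-12c/W}$ with $s=a/f$ reproduces the stated $f(t)$, $k(t)$ and maximal interval. But the entire middle of the proof --- the step you yourself call ``the main obstacle'' --- is not carried out: you only announce that you ``expect to show'' that $P(\psip+k\,d_6\Wd)$ collapses to $-4(1+ck)^{W/c}$ and that $\Wd(t)$ is a rescaling of $\Wd$. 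Everything downstream (the relation $(a/f)^{6c/W}=1+ck$, the value of $|\Wd(t)|^2_{h(t)}$, hence both ODEs) depends on these unproven claims, so as written this is a genuine gap, not a proof.

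Moreover, the specific route you propose is more delicate than you acknowledge. Since $\psip+k\,d_6\Wd$ is affine in $k$, the Hitchin invariant $P(\psip+k\,d_6\Wd)$ is a polynomial of degree at most $4$ in $k$; your claimed identity $P=-4(1+ck)^{W/c}$ therefore forces $W/c$ to be an integer in $\{1,2,3,4\}$. That integrality holds in all the examples of Table \ref{tabshf}, but nothing in the hypotheses (constancy of $|\Wd|_h$ and $|d_6\Wd|_h$, $d_6\Wd\W\Wd=0$, $\Delta_h\Wd=c\Wd$) obviously implies it, and you would have to prove it --- or weaken your claim --- before the argument closes. The paper sidesteps $P$ and the metric $h(t)$ entirely: it writes down the explicit candidate $\psim(t)=\tfrac{a^3}{f^3}\bigl(\psim-\tfrac{k}{1+ck}*_hd_6\Wd\bigr)$ and verifies it using the wedge identities \eqref{iddw2} together with $|d_6\Wd|_h^2=c\,|\Wd|_h^2$; it then extracts $\Wd(t)=\tfrac{a^2}{f^2}\tfrac{1}{1+ck}\Wd$ from $d_6\psim(t)$ via the decomposition \eqref{4formsdec6} and the eigenform hypothesis (with $d_6\Wd\W\Wd=0$ guaranteeing $\Wd(t)\in\Omega^2_8(M^6,t)$), and finally reads off $|\Wd(t)|^2_{h(t)}$ from $-\Wd(t)\W\Wd(t)\W\omega(t)=|\Wd(t)|^2_{h(t)}\tfrac{\omega(t)^3}{6}$. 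You would need to supply an argument of comparable substance --- either that one, or an actual proof of your polynomial identity --- for the proposal to stand.
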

\begin{proof}  
First of all, let us observe that the hypothesis on $\Wd$ imply 
\begin{equation}\label{propdw2w2}
\left| d_6 \Wd \right|_h^2 = c{\left| \Wd \right|_h^2}.
\end{equation}
Indeed, denoted by $(\cdot,\cdot)$ the $L^2$ inner product on $\Omega^k(M^6)$ and by $\left\|\cdot\right\|$ the associated norm, we have
\[
\left\|d_6\Wd \right\|^2 = (d_6\Wd,d_6\Wd) = (d_6^*d_6\Wd,\Wd) = (\Delta_h\Wd,\Wd) = c \left| \Wd \right|_h^2 \mathrm{Vol}(M^6),
\]
and
\[
\left\|d_6\Wd \right\|^2 = \left| d_6\Wd \right|_h^2 \mathrm{Vol}(M^6), 
\]
since $\left| d_6\Wd \right|_h$ is constant. This proves \eqref{propdw2w2}. Furthermore, from Lemma \ref{dwdexpr}, we get
\[
\left| d_6 \Wd \right|_h^2 = \frac{\left| \Wd \right|_h^4}{4} + |\gamma|_h^2 \geq \frac{\left| \Wd \right|_h^4}{4},
\]
from which follows that $c = \frac{\left| d_6 \Wd \right|_h^2}{\left| \Wd \right|_h^2} \geq\frac{\left| \Wd \right|_h^2}{4}$. 

We already know that the 2-form $\omega(t) =   a f(t)^{-1} \omega$ is stable and closed for every $t$. 
Let us consider the 3-form
\[
\psip (t) \coloneqq \psip  + k(t)  \,  d_6 \Wd,
\]
where $k(t)$ is a function depending only on $t$ and such that $k(0)=0$. 
As $\psip(0)=\psip\in\Omega^3_+(M^6)$, we can assume that $\psip(t)\in\Omega^3_+(M^6)$ for $t\in[0,\varepsilon)$, with $\varepsilon>0$ sufficiently small.  
Since $k(t)$ is constant on $M^6$, the 3-form $\psip (t)$ is closed. 
Moreover, the compatibility condition $\omega(t)\W\psip (t) = 0$ is satisfied by \eqref{dWdexpr}. 
Hence, the pair $(\omega(t),\psip(t))$ defines a family of symplectic half-flat $\SU(3)$-structures on $M^6.$  
We claim that 
\[
\psim(t) = \frac{a^3}{f(t)^3} \left( \psim - \frac{k(t)}{1 + c\,k(t)}    *_h d_6 \Wd\right). 
\]
Indeed, since $\omega(t)$ is proportional to $\omega$, the identity \eqref{dWdexpr} implies that $\psim(t)\W\omega(t)=0$. 
Moreover, using identities \eqref{iddw2} and \eqref{propdw2w2}, we have 
\[
\psip (t)\wedge \psim(t) = \frac{2}{3}\, \frac{a^3}{f(t)^3}\, \omega^3 = \frac{2}{3}\, \omega(t)^3.
\]
Our claim follows then from the results of \cite[Sect.~2.1]{BedVez}. Now, there exists a unique $\Wd(t)\in\Omega^2_8(M^6,t)$ such that
\[
d_6 \psim (t) =  \Wd (t) \wedge \omega (t).
\]
By the decomposition \eqref{4formsdec6} of $\Omega^4(M^6)$, we can write
\[
d_6(*_h d_6 \Wd) = \beta_0\,  \omega^2 + \beta_2 \wedge \omega + \beta_1 \wedge \psip,
\]
for unique $\beta_0 \in {\mathcal C}^{\infty} (M^6)$, $\beta_2 \in \Omega^2_8(M^6)$ and $\beta_1 \in \Omega^1 (M^6).$ 
The hypothesis $\Delta_h\Wd = c\,\Wd$ implies that both $\beta_0$ and $\beta_1$ are zero, and that $\beta_2=c\,\Wd$. 
Consequently, we get
\[
d_6 \psim (t) = \frac{a^2}{f(t)^2}\left(\frac{1}{1+c\,k(t)}\right)\Wd \W \omega(t).
\]
Using $\psip(t) = \psip  + k(t)\,d_6\Wd$ and $\omega(t)= a f(t)^{-1} \omega$, we see that the equation $d\Wd\W\Wd=0$ implies $\Wd\W\psip(t)=0$ and that 
$\Wd\W\omega(t)^2=0$, whence
\[
\Wd(t)\coloneqq\frac{a^2}{f(t)^2}\left(\frac{1}{1+c\,k(t)}\right)\Wd \in\Omega^2_8(M^6,t).
\] 

Now, from the equation $\ddt\psip(t) = \Delta_{h(t)}  \psip (t) = d_6 \Wd(t)$, we obtain  
\[
\dt k(t) =   \frac{a^2}{f(t)^2}\left(\frac{1}{1+c\,k(t)}\right).
\]
Albeit we have not computed the explicit expression of the metric $h(t)$, we can deduce the norm of $\Wd(t)$ from the identity 
\[
-\Wd(t)\W\Wd(t)\W\omega(t) = \left|\Wd(t)\right|^2_{h(t)}\frac{\omega(t)^3}{6}.
\]
It is
\[
\left|\Wd(t)\right|^2_{h(t)} = \frac{a^2}{f(t)^2}\left(\frac{1}{1+c\,k(t)}\right)^2\left|\Wd\right|^2_{h}.
\]
Hence, the evolution equation \eqref{evolutionf} of the function $f(t)$ becomes 
\[
\dt f(t) = -\frac16 \frac{a^2}{f(t)}\left(\frac{1}{1+c\,k(t)}\right)^2\left|\Wd\right|^2_{h}. 
\]
Thus, we have to solve the following initial value problem:
\[
\renewcommand\arraystretch{1.4}
\left\{
\begin{array}{l}
\dt f(t) = -\frac16 \frac{a^2}{f(t)}\left(\frac{1}{1+c\,k(t)}\right)^2\left|\Wd\right|^2_{h},\\
\dt k(t) =   \frac{a^2}{f(t)^2}\left(\frac{1}{1+c\,k(t)}\right),\\
f(0)=a,\\
k(0)=0.
\end{array}
\right.
\]
The two ODEs give
\[
\frac{\mathrm{d} f}{\mathrm{d} k} = -\frac16\frac{\left|\Wd\right|_h^2}{1+c\,k}\,f,
\]
from which we get 
\[
k(t) = \frac{1}{c}\left[\left(\frac{a}{f(t)}\right)^{\frac{6c}{|\Wd|_h^2}}-1 \right].
\]
Substituting this expression into the first ODE of the above system, we obtain the Cauchy problem 
\[
\renewcommand\arraystretch{1.4}
\left\{
\begin{array}{l}
\dt f(t) = -\frac{|\Wd|^2_{h}}{6}\, \left(a^{2-\frac{12\,c}{|\Wd|^2_{h}}}\right) \left(f(t)^{\frac{12\,c}{|\Wd|^2_{h}}-1}\right),\\
f(0)=a.
\end{array}
\right.
\]
A standard computation gives the final expression of $f(t)$.
\end{proof}

\begin{corollary}\label{corproductthm}
Theorem \ref{thmlapshf} holds true for every compact, six-dimensional manifold $M^6$  endowed with a symplectic half-flat $\SU(3)$-structure $(\omega,\psip)$ 
whose intrinsic torsion form $\Wd$ satisfies the equation
\[
d_6\Wd = \frac{\left|\Wd\right|^2_h}{4}\,\psip.
\]
In this case, the function $f(t)$ has the following expression 
\[
f(t) = a\left(\frac{\left|\Wd\right|^2_h}{6}\,t+1\right)^{-1},
\]
and the family of symplectic half-flat $\SU(3)$-structures on $M^6$ evolves only by a rescaling of the initial datum:
\[
\omega(t) = \left(\frac{\left|\Wd\right|^2_h}{6}\,t+1\right)\omega,\quad \psip(t) = \left(\frac{\left|\Wd\right|^2_h}{6}\,t+1\right)^{\frac32}\,\psip.
\]
\end{corollary}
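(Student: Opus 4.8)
The plan is to show that the single assumption $d_6\Wd=\frac{\left|\Wd\right|_h^2}{4}\psip$ forces every hypothesis of Theorem \ref{thmlapshf} to hold, with the distinguished value $c=\frac{\left|\Wd\right|_h^2}{4}$, and then to substitute this $c$ into the general formulas produced by that theorem. All the work is in the verification; the formulas for $f(t)$, $\omega(t)$ and $\psip(t)$ then drop out by algebra.

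First I would recognise the hypothesis as exactly the case $\gamma\equiv0$ of the decomposition \eqref{dWdexpr} in Lemma \ref{dwdexpr}. The final assertion of that lemma then gives that $\left|\Wd\right|_h$ is constant. Two of the four hypotheses of Theorem \ref{thmlapshf} follow immediately: $\left|\Wd\right|_h$ is constant by the above, and $\left|d_6\Wd\right|_h$ is constant because $d_6\Wd$ is a fixed multiple of $\psip$ whose pointwise norm $\left|\psip\right|_h^2=4$ is constant --- the latter being a consequence of the normalisation \eqref{normcond} together with $\psim=*_h\psip$ and $dV_h=\frac{\omega^3}{6}$ from \eqref{starSU3}, which give $\left|d_6\Wd\right|_h^2=\frac{\left|\Wd\right|_h^4}{16}\left|\psip\right|_h^2=\frac{\left|\Wd\right|_h^4}{4}$.

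Next I would verify the two remaining conditions. The identity $d_6\Wd\W\Wd=0$ is immediate, since $d_6\Wd$ is proportional to $\psip$ and $\Wd\W\psip=0$ by point \ref{rempropWd}) of Remark \ref{W2W4remark}. For the eigenvalue condition I would use that $\Wd=d_6^*\psip$ (same point of that remark) is coclosed, so that $\Delta_h\Wd=d_6^*d_6\Wd$; because $\left|\Wd\right|_h$ is constant, the scalar $\frac{\left|\Wd\right|_h^2}{4}$ pulls out of $d_6^*$ and we obtain $\Delta_h\Wd=\frac{\left|\Wd\right|_h^2}{4}\,d_6^*\psip=\frac{\left|\Wd\right|_h^2}{4}\,\Wd$. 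Thus the hypotheses of Theorem \ref{thmlapshf} are met with $c=\frac{\left|\Wd\right|_h^2}{4}>0$, the value that saturates the bound $c\geq\frac{\left|\Wd\right|_h^2}{4}$ established there.

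It then remains to specialise the output of Theorem \ref{thmlapshf} to $c=\frac{\left|\Wd\right|_h^2}{4}$. A direct computation gives $\frac{6c-\left|\Wd\right|_h^2}{3}=\frac{\left|\Wd\right|_h^2}{6}$ and $\frac{\left|\Wd\right|_h^2}{2\left|\Wd\right|_h^2-12c}=-1$, so that $f(t)=a\left(\frac{\left|\Wd\right|_h^2}{6}t+1\right)^{-1}$ and hence $\omega(t)=\frac{a}{f(t)}\,\omega=\left(\frac{\left|\Wd\right|_h^2}{6}t+1\right)\omega$. Writing $\mu(t)\coloneqq\frac{\left|\Wd\right|_h^2}{6}t+1=\frac{a}{f(t)}$, the exponent $\frac{6c}{\left|\Wd\right|_h^2}$ equals $\frac32$, so $k(t)=\frac{4}{\left|\Wd\right|_h^2}\left(\mu(t)^{3/2}-1\right)$; substituting this and $d_6\Wd=\frac{\left|\Wd\right|_h^2}{4}\psip$ into $\psip(t)=\psip+k(t)\,d_6\Wd$ collapses to $\psip(t)=\mu(t)^{3/2}\psip=\left(\frac{\left|\Wd\right|_h^2}{6}t+1\right)^{3/2}\psip$. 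The only delicate point is the constancy of $\left|\Wd\right|_h$, on which the reduction $\Delta_h\Wd=\frac{\left|\Wd\right|_h^2}{4}\Wd$ rests; once that is in hand, the rest is routine substitution.
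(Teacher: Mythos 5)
Your proposal is correct and follows the same route as the paper: it verifies that the hypothesis $d_6\Wd=\tfrac{|\Wd|_h^2}{4}\psip$ (the case $\gamma\equiv0$ of Lemma \ref{dwdexpr}) forces all four assumptions of Theorem \ref{thmlapshf} to hold with $c=\tfrac{|\Wd|_h^2}{4}$, and then substitutes this value into the general formulas. You in fact supply more detail than the paper's proof, which merely asserts the constancy of $|d_6\Wd|_h$ and the eigenform property; your derivations of $|\psip|_h^2=4$ and of $\Delta_h\Wd=d_6^*d_6\Wd=\tfrac{|\Wd|_h^2}{4}\Wd$ are exactly the computations the authors leave implicit.
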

\begin{proof}
By Lemma \ref{dwdexpr}, the norm $\left|\Wd\right|_h$ is constant. It is clear that $d_6\Wd$ has constant norm, too. 
Moreover, $\Wd$ is an eigenform of the Laplacian $\Delta_h$ corresponding to the eigenvalue $\frac{1}{4}\left|\Wd\right|^2_h$. 
Finally, $d\Wd\W\Wd=0$. Thus, all of the hypothesis of Theorem \ref{thmlapshf} are satisfied. 
\end{proof}

We conclude this section with some remarks on the hypothesis of Theorem \ref{thmlapshf}.  
By \eqref{ScalSHF}, $|\Wd|_h$ is constant if and only if the scalar curvature of $h$ is constant. 
This request is motivated by the expression of the evolution equation \eqref{evolutionf} for $f(t)$. 
The closedness of $\Wd\W\Wd$ guarantees that the 2-form $\Wd(t)$ appearing in the proof of the theorem belongs to $\Omega^2_8(M^6,t)$. 
As we will see shortly, there are many examples where this condition is satisfied. 
Nevertheless, it does not need to hold in general, as one can check 
considering for instance the symplectic half-flat $\SU(3)$-structure on $\T^6$ described in Example \ref{extorus}. 
Finally, in the next section we will provide various examples of symplectic half-flat $\SU(3)$-structures whose intrinsic torsion form $\Wd$ is an eigenform of the 
Hodge Laplacian. However, we point out that there exist examples where $\Wd$ does not satisfy this property (cf.~Example \ref{exWdnoteigenform}).

\section{Examples}\label{ExSect}
In this section, we apply the results obtained so far to explicit examples. 

\subsection{Lie groups}
The first class of examples we focus on is given by left-invariant $\SU(3)$- and $\G_2$-structures on simply connected Lie groups. 
It is well-known that the solution of the Laplacian flow starting form a left-invariant closed $\G_2$-structure on a simply connected Lie group $\G$ is left-invariant, 
and that the flow equation is equivalent to an ODE on the Lie algebra $\frg$ of $\G$ (see \cite{FFM,Lau1,Lau2}). 
Hence, short-time existence and uniqueness of the solution of the flow in the class of left-invariant $\G_2$-structures is always guaranteed. 

Recall that an $\SU(3)$-structure on a Lie algebra $\frg$ is defined by a pair of stable forms $\omega\in\Lambda^2(\frg^*)$, 
$\psip\in\Lambda^3_\+(\frg^*)$ satisfying the compatibility condition \eqref{compcond}, the normalization condition \eqref{normcond}, 
and inducing a positive definite inner product $h$ on $\frg$. 
When both $\omega$ and $\psip$ are closed, the pair $(\omega,\psip)$ is a symplectic half-flat $\SU(3)$-structure. 

Starting with a six-dimensional Lie algebra $\frg$ endowed with a symplectic half-flat $\SU(3)$-structure $(\omega,\psip)$, 
the {\em product algebra} $\hat\frg\coloneqq \frg\oplus\R$ has a closed $\G_2$-structure given by 
\begin{equation}\label{phiLie}
\f = \omega\W e^7+\psip,
\end{equation}
where $\R={\rm span}(e_7)$, and  $e^7$ denotes the dual of $e_7$.

An $\SU(3)$-structure on a Lie algebra $\frg$ gives rise to a left-invariant $\SU(3)$-structure on the simply connected Lie group $\G$ with Lie algebra $\frg$. 
Such correspondence is one-to-one.  
When  $\G$ admits a lattice $\Gamma$, the left-invariant differential forms on $\G$ pass to the compact quotient $\Gamma \backslash \G$, 
defining an invariant $\SU(3)$-structure  on it. 
Similarly, the 3-form $\f$ given by \eqref{phiLie} corresponds to a left-invariant $\G_2$-structure on the simply connected Lie group $\hat\G$ with Lie algebra $\hat\frg$, 
and from this it is possible to define an invariant $\G_2$-structure on the compact manifold $\left(\Gamma \backslash \G\right)\times\Su$, whenever a lattice $\Gamma\subset\G$ exists. 

It is well-known that if a Lie group admits a lattice, then it is unimodular (cf.~\cite{Mil}). 
Furthermore, a unimodular symplectic Lie algebra is necessarily solvable (see e.g.~\cite{LicMed}). 

Unimodular, solvable Lie algebras endowed with a symplectic half-flat $\SU(3)$-structure were classified in \cite{FMOU}. 
Using the convention explained in Example \ref{firstexsolv} to denote the structure equations of a Lie algebra, the result can be stated as follows. 
\begin{theorem}[\cite{FMOU}]\label{solvableSHF}
Let $\frg$ be a six-dimensional, unimodular, non-Abelian, solvable Lie algebra. Then, $\frg$ is endowed with a symplectic half-flat $\SU(3)$-structure if and only if 
it is isomorphic to one of the following
\begin{eqnarray*}
\fre(1,1)\oplus\fre(1,1)			&=&	(0,-e^{13},-e^{12},0,-e^{46},-e^{45});\\
\frg_{5,1}\oplus\R				&=& (0,0,0,0,e^{12},e^{13});\\
A_{5,7}^{-1,-1,1}\oplus\R			&=& (e^{15},-e^{25},-e^{35},e^{45},0,0);\\
A_{5,17}^{\alpha,-\alpha,1}\oplus\R	&=& (\alpha e^{15}+e^{25},-e^{15}+\alpha e^{25},-\alpha e^{35}+e^{45},-e^{35}-\alpha e^{45},0,0),~\alpha>0;\\
\frg_{6,N3}					&=& (0,0,0,e^{12},e^{13},e^{23});\\
\frg_{6,38}^{0}					&=& (e^{23},-e^{36},e^{26},e^{26}-e^{56},e^{36}+e^{46},0);\\
\frg_{6,54}^{0,-1}				&=& (e^{16} + e^{35}, -e^{26} + e^{45}, e^{36}, -e^{46}, 0, 0);\\
\frg_{6,118}^{0,-1,-1}				&=& (-e^{16} +e^{25},-e^{15} -e^{26}, e^{36} -e^{45}, e^{35} +e^{46}, 0, 0).
\end{eqnarray*}
\end{theorem}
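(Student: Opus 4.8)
The statement is a classification, so any proof must settle both implications: that each of the eight algebras listed carries a symplectic half-flat $\SU(3)$-structure, and that no other six-dimensional unimodular non-Abelian solvable Lie algebra does. The plan is to first exploit the weakest necessary condition — the mere existence of a symplectic form — to reduce the problem to a finite list of candidate algebras, and then to decide case by case whether a compatible closed stable $3$-form exists. A preliminary remark organizing the hypotheses: the torsion-free case $\Wd=0$ forces $\Scal(h)=0$ by the formula $\Scal(h)=-\tfrac12\left|\Wd\right|_h^2$, hence a flat (indeed Abelian) model, so the assumption that $\frg$ be non-Abelian is precisely what excludes the trivial Calabi--Yau structure on $\R^6$ and makes the classification nontrivial.

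First I would observe that a symplectic half-flat $\SU(3)$-structure $(\omega,\psip)$ forces $\omega$ to be a left-invariant symplectic form, so $\frg$ must be a symplectic Lie algebra. Since $\frg$ is assumed unimodular and solvable (consistently with the result of \cite{LicMed} recalled in the text), I would invoke the existing classification of six-dimensional unimodular non-Abelian solvable symplectic Lie algebras to replace the problem by a finite list of candidates. It is then convenient to split these into the nilpotent ones (here $\frg_{5,1}\oplus\R$ and $\frg_{6,N3}$) and the strictly solvable ones (the remaining six), since the nilpotent case can be handled directly using the known classification of six-dimensional nilpotent Lie algebras together with the already-available characterization of which of them admit symplectic half-flat structures.

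For the ``if'' direction I would proceed constructively: for each of the eight algebras I would exhibit an explicit pair of left-invariant forms $\omega\in\Lambda^2(\frg^*)$, $\psip\in\Lambda^3_{\+}(\frg^*)$ written in the coframe $\{e^1,\dots,e^6\}$, and verify directly that $d\omega=0$, $d\psip=0$, the compatibility $\omega\W\psip=0$ and the normalization $\psip\W\psim=\tfrac23\,\omega^3$, and that the induced symmetric tensor $h(\cdot,\cdot)=\omega(\cdot,J\cdot)$ is positive definite. The computation carried out in Example \ref{firstexsolv} for $A_{5,7}^{-1,-1,1}\oplus\R$ is the template for all eight verifications, which are routine once the forms have been guessed.

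The genuinely hard part will be the ``only if'' direction, namely ruling out every symplectic candidate not appearing in the list. Here the strategy is, for each such $\frg$, to parametrize the space of closed $3$-forms as the kernel of the Chevalley--Eilenberg differential $d\colon\Lambda^3(\frg^*)\to\Lambda^4(\frg^*)$, a linear space read off directly from the structure equations, and then to search inside it for a $\psip$ that is simultaneously stable with $P(\psip)<0$, primitive with respect to some symplectic $\omega$, and inducing a positive-definite $h$. The obstruction is that this is a semialgebraic feasibility problem whose \emph{infeasibility} must be established for each excluded algebra; since both $\omega$ and $\psip$ vary, I would first use the actions of the symplectomorphism group and of $\mathrm{Aut}(\frg)$ to normalize $\omega$, cutting down the free parameters before imposing the closed-form, stability and positivity conditions. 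I expect the principal difficulty to be exactly these non-existence arguments: showing that the intersection of the open orbit $\Lambda^3_{\+}(\frg^*)$ with the linear space of closed primitive forms contains no positive-definite representative does not follow from a single uniform obstruction and requires a careful, algebra-by-algebra analysis.
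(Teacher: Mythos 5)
The first thing to say is that the paper does not prove this statement at all: Theorem \ref{solvableSHF} is imported verbatim from \cite{FMOU}, so there is no internal proof to compare against. Your outline does match, at the level of strategy, what is actually done in \cite{FMOU} — reduce via the Lichnerowicz--Medina observation to the known classification of six-dimensional unimodular (hence solvable) symplectic Lie algebras, then decide existence of a compatible closed stable $3$-form algebra by algebra. The paper itself supplies the data needed for your ``if'' direction: Table \ref{tabshf} lists an explicit pair $(\omega,\psip)$ on each of the eight algebras, and Example \ref{firstexsolv} carries out the verification template you describe.

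The genuine gap is that, as written, your text is a plan rather than a proof. The ``only if'' direction — which you yourself identify as the entire difficulty — is not executed for a single excluded algebra: you describe a semialgebraic feasibility problem and a normalization strategy using $\mathrm{Aut}(\frg)$ and the symplectomorphism group, but you do not produce the list of symplectic candidates, do not perform any normalization, and do not exhibit any obstruction. Without at least one worked exclusion there is no evidence the method closes. The ``if'' direction is likewise deferred to ``routine once the forms have been guessed,'' but guessing the forms is the content of that half. Finally, your preliminary remark is logically off: $\Wd=0$ together with $\Scal(h)=-\tfrac12\left|\Wd\right|_h^2$ gives $\Scal(h)=0$, but vanishing scalar curvature does not imply flatness, and flatness of a left-invariant metric does not imply the algebra is Abelian (non-Abelian flat unimodular solvable Lie groups exist). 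The non-Abelian hypothesis in the statement simply excludes $\R^6$; it is not derived from a curvature argument, and in any case all eight listed algebras carry structures with $\Wd\neq 0$, so the torsion-free case plays no role in the classification.
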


\begin{remark}
Observe that $\frg_{5,1}\oplus\R$ and $\frg_{6,N3}$ are the only nilpotent Lie algebras among those appearing in the previous theorem.   
Moreover, the first four in the list are decomposable, whereas the remaining ones are indecomposable. 
Finally, all of the corresponding simply connected solvable Lie groups admit a lattice (see \cite{FMOU} and the references therein). 
\end{remark}

In Table \ref{tabshf}, an explicit example of symplectic half-flat $\SU(3)$-structure for each Lie algebra of Theorem \ref{solvableSHF} is given. 
Most of them already appeared in \cite{ConTom,FMOU,FrSchD,TomVez}, while the one on $\frg_{6,54}^{0,-1}$ is new. 

Let $\frg$ be one of the unimodular solvable Lie algebras of Theorem \ref{solvableSHF}, and assume that it is endowed with the symplectic half-flat 
$\SU(3)$-structure described in Table \ref{tabshf}. 
Then, Theorem \ref{thmlapshf} can be adapted in the obvious way to this setting, allowing to find the solution of the 
Laplacian flow starting from the closed $\G_2$-structure given by \eqref{phiLie} on $\hat\frg=\frg\oplus\R$. 

\begin{proposition}
All of the symplectic half-flat $\SU(3)$-structures appearing in Table \ref{tabshf} satisfy the hypothesis of Theorem \ref{thmlapshf}. 
Among them, the example on $\fre(1,1)\oplus\fre(1,1)$ is the only one satisfying the hypothesis of Corollary \ref{corproductthm}. 
\end{proposition}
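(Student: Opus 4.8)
The plan is to reduce everything to explicit Chevalley--Eilenberg computations on each of the eight Lie algebras of Theorem \ref{solvableSHF}, using the representative structures of Table \ref{tabshf}. The first observation is that all the forms $\omega$, $\psip$, $\psim$, and hence $\Wd=d_6^*\psip$, are left-invariant, and the metric $h$ is left-invariant as well; therefore the pointwise norms $|\Wd|_h$ and $|d_6\Wd|_h$ are constant functions without any further work. This disposes of the two constancy hypotheses of Theorem \ref{thmlapshf} for free, leaving only the eigenform equation $\Delta_h\Wd=c\,\Wd$ with $c>0$ and the condition $d_6\Wd\W\Wd=0$ to be checked in each case.

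For the eigenform equation I would exploit the fact that $\Wd$ is coclosed (point \ref{rempropWd}) of Remark \ref{W2W4remark}), so that $\Delta_h\Wd=d_6^*d_6\Wd$, the term $d_6d_6^*\Wd$ vanishing because $d_6^*\Wd=0$. Concretely, for left-invariant data it is fastest to recover $\Wd$ from the structure equation $\Wd\W\omega=d_6\psim$, then form $d_6\Wd$ and its Hodge dual $*_hd_6\Wd$ algebraically, and finally read off $\Delta_h\Wd=-*_hd_6*_hd_6\Wd$. Carrying this out on each algebra yields in every case a proportionality $\Delta_h\Wd=c\,\Wd$, from which $c$ is identified. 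The inequality $c\ge\frac{1}{4}|\Wd|_h^2$ is then automatic from the relation $|d_6\Wd|_h^2=c\,|\Wd|_h^2$ established in the proof of Theorem \ref{thmlapshf} together with the decomposition $|d_6\Wd|_h^2=\frac{1}{4}|\Wd|_h^4+|\gamma|_h^2$ coming from \eqref{dWdexpr}. The closedness condition $d_6\Wd\W\Wd=0$ is a single wedge computation per algebra.

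For the statement about Corollary \ref{corproductthm}, the hypothesis there is precisely $d_6\Wd=\frac{1}{4}|\Wd|_h^2\,\psip$, i.e.\ the vanishing of the $\Omega^3_{12}(M^6)$-component $\gamma$ in the decomposition \eqref{dWdexpr}. Equivalently, by $c=|d_6\Wd|_h^2/|\Wd|_h^2$ the corollary applies exactly when the eigenvalue attains its minimum $c=\frac{1}{4}|\Wd|_h^2$, so that $\gamma=0$. Thus, after computing $c$ and $|\Wd|_h$ for each algebra, one simply compares: the plan is to verify that only for $\fre(1,1)\oplus\fre(1,1)$ does equality hold, the remaining seven algebras having $\gamma\neq0$ and hence $c>\frac{1}{4}|\Wd|_h^2$ strictly.

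The main obstacle I expect is not conceptual but the sheer bookkeeping: each of the eight cases requires computing $\Wd$, $d_6\Wd$, a Hodge star, and $\Delta_h\Wd$ in the given basis, and one must correctly pin down the eigenvalue $c$ and the $\gamma$-component. A subtle point worth flagging is that constancy of $|\Wd|_h$ and the eigenform property are genuinely independent of symplectic half-flatness in general, as the torus structure of Example \ref{extorus} shows, so the verification really has to be done case by case rather than inferred from the common hypotheses shared by the list.
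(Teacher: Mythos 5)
Your proposal is correct and follows essentially the same route as the paper, which likewise proves the proposition by exhibiting $h$ and $\Wd$ for each of the eight algebras and verifying the hypotheses by direct computation; your added observations (left-invariance gives constancy of the norms for free, $\Delta_h\Wd=d_6^*d_6\Wd$ by coclosedness, and the Corollary applies exactly when $\gamma=0$, i.e.\ $c=\tfrac14|\Wd|_h^2$) are all sound and consistent with the data in Table \ref{tabshf}.
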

\begin{proof}
It is sufficient to describe the expression of the inner product $h$ and the intrinsic torsion form $\Wd$. The properties can then be checked by straightforward computations. 
\[
\renewcommand\arraystretch{1.5}
\begin{array}{rl}
\fre(1,1)\oplus\fre(1,1):			& h =\sum_{i=1}^4(e^i)^2 + 2\sum_{i=5}^6(e^i)^2,	\quad \Wd =2\,\left(e^{26}+e^{25}+e^{36}-e^{35}\right);\\
\frg_{5,1}\oplus\R:				& h =\sum_{i=1}^6(e^i)^2,						\quad \Wd = e^{26}-e^{35};\\
A_{5,7}^{-1,-1,1}\oplus\R:			& h =\sum_{i=1}^6(e^i)^2,						\quad \Wd =2\left(e^{14}-e^{23}\right);\\
A_{5,17}^{\alpha,-\alpha,1}\oplus\R:	& h =\sum_{i=1}^6(e^i)^2,						\quad \Wd =-2\alpha\left(e^{12}+e^{34}\right);\\
\frg_{6,N3}:					& h =\sum_{i=1}^5(e^i)^2+4(e^6)^2,				\quad \Wd = 4\,e^{16}-e^{25}+e^{34};\\
\frg_{6,38}^{0}:					& h =4(e^1)^2 + \sum_{i=2}^6(e^i)^2,				\quad \Wd =4\,e^{16}-e^{25}+e^{34};\\
\frg_{6,54}^{0,-1}:				& h =\sum_{i=1}^5(e^i)^2+2(e^6)^2,				\quad \Wd = \sqrt{2}\,e^{13}-e^{14}+e^{23}+\sqrt{2}\,e^{24};\\
\frg_{6,118}^{0,-1,-1}:			& h =\sum_{i=1}^6(e^i)^2,						\quad \Wd =2\left(e^{12}-e^{34}\right).
\end{array}
\]
\end{proof}

\begin{remark}
The solution of the Laplacian flow on the seven-dimensional nilpotent Lie algebra $\frn_2\coloneqq \frg_{5,1}\oplus\R\oplus\R$ coming from the above result 
was previously obtained in \cite[Thm.~4.2]{FFM} using a different method.  
\end{remark}

\begin{table}[ht]
\centering
\renewcommand\arraystretch{1.4}
\adjustbox{max width=\textwidth}{
\begin{tabular}{|c|c|c|c|}
\hline
Lie algebra				&symplectic half-flat $\SU(3)$-structure							&$\Delta_h\Wd=c\,\Wd$	\\ \hline \hline
$\fre(1,1)\oplus\fre(1,1)$				&	$\begin{array}{c} \omega =e^{14}+e^{23}+2e^{56} \\ 
									\psip =e^{125}-e^{126}-e^{135}-e^{136}+e^{245}+e^{246}+e^{345}-e^{346}  \end{array}$			&	$c=2$			\\ \hline    

$\frg_{5,1}\oplus\R$					&	$\begin{array}{c} \omega = e^{14} +e^{26}+e^{35}\\ 
									\psip = e^{123}+e^{156}+e^{245}-e^{346} \end{array} $									&	$c=2$			\\ \hline

$A_{5,7}^{-1,-1,1}\oplus\R$			&	$\begin{array}{c} \omega = -e^{13}+e^{24}+e^{56}
									\\ \psip =  -e^{126}-e^{145}-e^{235}-e^{346}\end{array}$									&	$c=4$			\\ \hline

$A_{5,17}^{\alpha,-\alpha,1}\oplus\R$	&	$\begin{array}{c} \omega  = e^{13}+e^{24}+e^{56}
									\\ \psip = e^{125}-e^{146}+e^{236}-e^{345} \end{array} $									&	$c=4\alpha^2$		\\ \hline

$\frg_{6,N3}$						&	$\begin{array}{c} \omega = 2e^{16}+e^{25}-e^{34} 
									\\ \psip = -e^{123}+e^{145}-2e^{246}-2e^{356} \end{array} $								&	$c=6$			\\ \hline

$\frg_{6,38}^{0}	$					&	$\begin{array}{c} \omega = -2e^{16}+e^{34}-e^{25}
									\\ \psip = -2e^{135}-2e^{124}+e^{236}-e^{456} \end{array} $								&	$c=6$			\\ \hline

$\frg_{6,54}^{0,-1}$					&	$\begin{array}{c} \omega =e^{14}+e^{23}+\sqrt{2}e^{56} \\ 
									\psip =e^{125}-\sqrt{2}e^{136}+\sqrt{2}e^{246}+e^{345}  \end{array} $						&	$c=2$			\\ \hline

$\frg_{6,118}^{0,-1,-1}$				&	$\begin{array}{c} \omega = e^{14}+e^{23}-e^{56} \\ 
									\psip = e^{126}-e^{135}+e^{245}+e^{346} \end{array} $									&	$c=4$			\\ \hline

\end{tabular}}
\vspace{0.1cm}
\caption{Examples of symplectic half-flat $\SU(3)$-structures on the Lie algebras listed in Theorem \ref{solvableSHF}.}\label{tabshf}
\end{table}
\renewcommand\arraystretch{1}

A further question arising in this setting is whether the left-invariant closed $\G_2$-structures considered above 
define a Laplacian soliton on  $\hat\G$.  
By \cite[Thm.~4.10]{Lau1}, a sufficient condition is given by the existence of a derivation $D\in{\rm Der}(\hat\frg)$ and a real number $\lambda$ such that 
\begin{equation}\label{LapSolDer}
\Delta_\f\f = \mathcal{L}_{X_D}\f+\lambda\,\f,
\end{equation}
where $X_D$ is the vector field on $\hat\G$ induced by the one-parameter group of automorphisms with derivative $\exp(tD)\in{\rm Aut}(\hat\frg)$. 

\begin{proposition}
Let $\frg$ be one of the unimodular solvable Lie algebras appearing in Theorem \ref{solvableSHF}, and endow it with the   
symplectic half-flat $\SU(3)$-structure $(\omega,\psip)$ of Table \ref{tabshf}. 
Then, the closed $\G_2$-structure $\f=\omega\W e^7+\psip$ on $\hat\frg=\frg\oplus\R$ gives rise to a Laplacian soliton on $\hat\G$ 
when $\frg$ is $\fre(1,1)\oplus\fre(1,1)$, $\frg_{5,1}\oplus\R$, $A_{5,7}^{-1,-1,1}\oplus\R$, $A_{5,17}^{\alpha,-\alpha,1}\oplus\R$, 
$\frg_{6,N3}$, $\frg_{6,54}^{0,-1}$. 
\end{proposition}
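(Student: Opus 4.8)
The plan is to verify the algebraic soliton criterion \eqref{LapSolDer} of \cite[Thm.~4.10]{Lau1}: for each of the six listed algebras I would exhibit a derivation $D\in{\rm Der}(\hfrg)$ and a constant $\lambda\in\R$ with $\Delta_\f\f=\mathcal{L}_{X_D}\f+\lambda\,\f$. The essential simplification is that here $f$ is constant, so $\hfrg=\frg\oplus\R$ is a product and $(\omega,\psip)$ is symplectic half-flat; hence Proposition \ref{propDeltaphiwarp}, together with the identity $d_6\Wd=\Delta_h\psip$ established in its proof, gives $\Delta_\f\f=d_6\Wd$, a purely horizontal $3$-form in $\Lambda^3\frg^*$. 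By Lemma \ref{dwdexpr} we may write $d_6\Wd=\tfrac{|\Wd|_h^2}{4}\,\psip+\gamma$ with $\gamma\in\Omega^3_{12}(M^6)$, and the data $h$ and $\Wd$ --- hence $d_6\Wd$ and $\gamma$ --- are all read off from the preceding proposition and Table \ref{tabshf}.

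Next I would reduce the soliton equation to a finite linear problem on $\frg$. Denote by $D\cdot\alpha$ the infinitesimal action of $D$ on a $p$-form, $(D\cdot\alpha)(X_1,\dots,X_p)=\sum_i\alpha(X_1,\dots,DX_i,\dots,X_p)$, so that $\mathcal{L}_{X_D}\f=D\cdot\f$. Taking the ansatz that $D$ preserves the splitting $\hfrg=\frg\oplus\R e_7$, with $D_0\coloneqq D|_\frg$ and $De_7=c_7\,e_7$, one computes
\[
\mathcal{L}_{X_D}\f=(D_0\cdot\omega+c_7\,\omega)\W e^7+D_0\cdot\psip.
\]
Since $\Delta_\f\f=d_6\Wd$ has no $e^7$-component, equating the two sides of \eqref{LapSolDer} separates into the eigenform condition $D_0\cdot\omega=-(c_7+\lambda)\,\omega$ and the horizontal equation $D_0\cdot\psip=\big(\tfrac{|\Wd|_h^2}{4}-\lambda\big)\psip+\gamma$. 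As $D\cdot(\cdot)$ depends linearly on the finitely many entries of $D_0$, these two requirements constitute a linear system in the entries of $D_0$ and in $c_7,\lambda$.

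The remaining --- and main --- difficulty is to solve this system case by case while ensuring that $D_0$ is a genuine derivation, i.e.\ $D_0[e_i,e_j]=[D_0e_i,e_j]+[e_i,D_0e_j]$ for the structure equations of Theorem \ref{solvableSHF}. For $\fre(1,1)\oplus\fre(1,1)$ this is immediate from Corollary \ref{corproductthm}, where the flow merely rescales $(\omega,\psip)$: $\f(t)$ is self-similar, so $D_0$ is a multiple of the identity and the soliton is expanding. In the remaining five cases I would try a diagonal ansatz $D_0=\diag(d_1,\dots,d_6)$ first --- which I expect to succeed for the nilpotent algebras $\frg_{5,1}\oplus\R$ and $\frg_{6,N3}$ and for the structures with an evident grading --- allowing off-diagonal entries adapted to the structure equations where needed (for instance for $A_{5,7}^{-1,-1,1}\oplus\R$, $A_{5,17}^{\alpha,-\alpha,1}\oplus\R$, $\frg_{6,54}^{0,-1}$). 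In each case the $\psip$-component of the horizontal equation fixes $\lambda$ in terms of $|\Wd|_h^2$, the eigenform condition then fixes $c_7$, and one must check that the $\Omega^3_{12}$-part of $D_0\cdot\psip$ reproduces $\gamma$; simultaneous solvability by a derivation is the computational heart of the argument. Consistently with the statement, for the two excluded algebras $\frg_{6,38}^{0}$ and $\frg_{6,118}^{0,-1,-1}$ I expect the same linear system to be incompatible with the derivation constraint.
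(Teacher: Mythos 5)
Your proposal is correct and follows essentially the same route as the paper: both verify Lauret's sufficient criterion \eqref{LapSolDer} by exhibiting a derivation $D\in{\rm Der}(\hat\frg)$ and a constant $\lambda$, using that $\Delta_\f\f=d_6\Wd$ is horizontal so that the soliton equation splits into the $\omega\W e^7$- and $\psip$-components you describe. The only difference is one of presentation — the paper simply lists the (all diagonal, with $2\lambda=6c-|\Wd|_h^2$) derivations rather than setting up the linear system, and in particular no off-diagonal entries turn out to be needed in any of the six cases.
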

\begin{proof}
It is sufficient to check that the closed $\G_2$-structure $\f=\omega\W e^7+\psip$ satisfies \eqref{LapSolDer} for a suitable derivation $D\in{\rm Der}(\hat\frg)$ 
and a real number $\lambda$. We have:
\[
\begin{array}{rl}
\fre(1,1)\oplus\fre(1,1):			& D=\diag\left(0,0,0,0,0,0,-2\right),			\quad	 \lambda=2;\\
\frg_{5,1}\oplus\R:				& D=\diag\left(-1,-1,-1,-2,-2,-2,-2\right),		\quad	 \lambda=5;\\
A_{5,7}^{-1,-1,1}\oplus\R:			& D=\diag\left(-2,-2,-2,-2,0,-4,-4\right),		\quad	 \lambda=8;\\
A_{5,17}^{\alpha,-\alpha,1}\oplus\R:	& D=\alpha^2\,\diag\left(-2,-2,-2,-2,0,-4,-4\right),	\quad 	 \lambda=8\alpha^2;\\
\frg_{6,N3}:					& D=\diag\left(-3,-3,-3,-6,-6,-6,-6\right),		\quad	 \lambda=15;\\
\frg_{6,54}^{0,-1}:				& D=\diag\left(-1,-1,0,0,-1,0,-2\right),			\quad	 \lambda=3.
\end{array}
\]
Notice that in each case $2 \lambda = 6c-|\Wd|_h^2$.
\end{proof}

\begin{remark}
Since the derivations appearing in the proof of the previous theorem are all symmetric, the closed $\G_2$-structures on $\hat\frg$ are all {\em algebraic solitons} in the sense of 
\cite{Lau1,Lau2}. Observe that the closed algebraic solitons on the nilpotent Lie algebras $\frn_2\coloneqq \frg_{5,1}\oplus\R\oplus\R$ and $\frn_3\coloneqq \frg_{6,N3}\oplus\R$ 
were already obtained in \cite{FFM,Lau1,Nic}. 
Moreover, even if the Lie group $\hat\G$ admits a lattice $\Gamma$, the vector field $X_D$ does not descend to the compact solvmanifold $\Gamma\backslash\hat\G$. 
Hence, the invariant closed $\G_2$-structures on $\Gamma\backslash\hat\G$ arising in this context are not necessarily Laplacian solitons. 
Finally, a straightforward computation shows that the two remaining examples of closed $\G_2$-structures on $\frg_{6,38}^{0}\oplus\R$ and $\frg_{6,118}^{0,-1,-1}\oplus\R$ are not 
algebraic solitons.
\end{remark}

In the next example, we study the behaviour of the Laplacian flow when the initial datum is a symplectic half-flat $\SU(3)$-structure whose intrinsic torsion form $\Wd$ is not 
an eigenform of the Hodge Laplacian.  In particular, we obtain that the metric $g_{\f(t)}$ induced by the solution is a Riemannian product for each $t$.
\begin{example}\label{exWdnoteigenform}
Consider the Lie algebra $\frg_{6,54}^{0,-1}$ endowed with the symplectic half-flat $\SU(3)$-structure described in \cite[Ex.~3.1]{TomVez}
\[
\omega = e^{14}+e^{23}+e^{56},\qquad \psip = e^{125}-e^{136}+e^{246}+e^{345}.
\]
The inner product induced by $(\omega,\psip)$ is $h=\sum_{i=1}^6(e^i)^2$. 
A simple computation shows that $\Wd$ is not an eigenform of the Hodge Laplacian $\Delta_h$. Indeed
\[
\Wd = e^{23}-e^{14}+2\,e^{13}+2\,e^{24},\quad \Delta_h\Wd = 2\,e^{23}-2\,e^{14}+8\,e^{13}+8\,e^{24}.
\] 
Thus, we cannot apply Theorem \ref{thmlapshf} to obtain the solution of the Laplacian flow on $\frg_{6,54}^{0,-1}\oplus\R$ starting from the closed $\G_2$-structure
\[
\f=\omega\W e^7+\psip = e^{147}+e^{237}+e^{567}+e^{125}-e^{136}+e^{246}+e^{345}.
\]
Anyway, we can find it following the approach used in the proof of \cite[Thm.~4.2]{FFM}. 
The idea is to consider a new basis of covectors ${\tilde{e}}^k\coloneqq v_k\,e^k$, $k=1,\ldots,7,$ 
where $v_k=v_k(t)$ are nowhere vanishing real-valued functions satisfying $v_k(0)=1$, 
and look for a solution of the following type
\[
\f(t)	=  v_{147}\,e^{147}+v_{237}\,e^{237}+v_{567}\,e^{567}+v_{125}\,e^{125}-v_{136}\,e^{136}+v_{246}\,e^{246}+v_{345}\,e^{345},
\]
where the notation $v_{ijk}$ is used for the product of functions $v_iv_jv_k$. 
A straightforward computation gives
\[
\f(t)=e^{147}+e^{237}+e^{567}+e^{125}-v_1^4v_4^2\,e^{136}+v_1^4v_4^2\,e^{246}+\left(\frac{v_4}{v_1}\right)^2e^{345},
\]
where $v_1$ and $v_4$ are the solutions of the following initial value problem
\[
\left\{
\renewcommand\arraystretch{1.5}
\begin{array}{l}
\frac{{\rm d}v_1}{{\rm d}t} = \frac{2-v_1^{12}}{3\,v_1^5v_4^2},\\
\frac{{\rm d}v_4}{{\rm d}t} = \frac23\frac{1+v_1^{12}}{v_1^6v_4},\\
v_1(0) = v_4(0) =1.
\end{array}
\right.
\]
Observe that $\f(t)$ can be written as 
\[
\f(t) = f(t)\, \omega(t)\W e^7 +\psip(t),
\]
where 
\[
\omega(t) = v_1v_4\left(e^{14}+e^{23}+e^{56}\right),\quad \psip(t) =e^{125}-v_1^4v_4^2\,e^{136}+v_1^4v_4^2\,e^{246}+\left(\frac{v_4}{v_1}\right)^2e^{345},
\]
is a family of symplectic half-flat $\SU(3)$-structures on  $\frg_{6,54}^{0,-1}$, and $f(t) = \left(v_1(t)v_4(t)\right)^{-1}.$ 
Hence, $g_{\f(t)} = h(t) + [f(t)]^2(e^7)^2$, where $h(t)$ is the metric induced by $(\omega(t),\psip(t))$. 
\end{example}

\subsection{Twistor spaces} 
Let $\left(M^4,g_4,dV_4\right)$ be an oriented Riemannian 4-manifold, and denote by $\mathcal{Z}$ its twistor space, namely the set of all almost complex structures on $M^4$ which are 
compatible with $g_4$ and preserve the orientation. 
$\mathcal{Z}$ is a six-dimensional manifold, and it can be naturally endowed with a family of $\SU(3)$-structures (see for instance \cite{EeSa}).

Assume that $\left(M^4,g_4\right)$ is self-dual Einstein of negative scalar curvature. 
Up to scale $g_4$, we can suppose that ${\rm Scal}(g_4) = -48$. Then, $\mathcal{Z}$ is endowed with a symplectic half-flat 
$\SU(3)$-structure $(\omega,\psip)$ satisfying
\[
d_6\Wd =\frac{\left|\Wd\right|_h^2}{4} \psip,
\]
where $\left|\Wd\right|_h^2=96$ (see e.g.~\cite[Sect.~1.2]{Xu} for explicit computations). 
Compact examples of this type can be obtained when $M^4$ is the compact quotient of the four-dimensional hyperbolic space by some discrete isometry group.

By Corollary \ref{corproductthm}, the solution of the system \eqref{systcompletefomgpsip} with initial conditions $f(0)=1$ and $(\omega(0),\psip(0))=(\omega,\psip)$ is
\[
f(t) = \left(16t+1\right)^{-1},\quad \omega(t) = \left(16t+1\right)\,\omega,\quad \psip(t)=\left(16t+1\right)^{\frac32}\psip.
\]
Therefore, the Laplacian flow on $\mathcal{Z}\times\Su$ starting from the closed $\G_2$-structure $\f=\omega\W ds+\psip$ has the following solution 
\[
\f(t) = \omega \W ds + \left(16t+1\right)^{\frac32}\psip,
\]
and the associated Riemannian metric is $g_{\f(t)} = \left(16t+1\right)h + \left(16t+1\right)^{-2}ds^2.$

\subsection{Non-flat closed G$_{\mathbf 2}$-structures on the 7-torus} 
In Example \ref{extorus}, we considered two different families of $\SU(3)$-structures on the 6-torus $\T^6$.  
Here, we use them to construct two examples of non-flat closed $\G_2$-structures on $\T^7$. 

Let us begin with the family of symplectic half-flat $\SU(3)$-structures introduced in \cite{TomVez}. 
It is defined for each $r>0$ by the pair $(\omega,\psip(r))$ given in \eqref{SHFT6}, 
and it induces the following Riemannian metric
\[
h(r) = e^{-r\,\ell_1}(dx^1)^2 + {e^{-r\,\ell_2}}(dx^2)^2 + {e^{-r\,\ell_3}}(dx^3)^2 + {e^{r\,\ell_1}}(dx^4)^2 + {e^{r\,\ell_2}}(dx^5)^2 + {e^{r\,\ell_3}}(dx^6)^2.
\]
Using identity \eqref{ScalSHF}, it is not difficult to check that the scalar curvature of $h(r)$ is nonconstant:
\[
{\rm Scal}(h(r)) = 	-r^2 \left( \left( \frac{\rm d}{{\rm d}x^3} c(x^3)  \right)^2 {e}^{r\ell_3} 
				+ \left( \frac{\rm d}{{\rm d}x^1} a(x^1)  \right)^2 {e}^{r\ell_1}    
				+ \left( \frac{\rm d}{{\rm d}x^2} b(x^2)  \right)^2 {e}^{r\ell_2}   \right).
\]
As observed in \cite[Prop.~1.4.4]{ManTh}, the 7-torus $\T^7=\T^6\times\Su$ is endowed with the family of closed $\G_2$-structures
$\f(r) \coloneqq \omega\W ds +\psip(r)$, whose associated Riemannian metric $g_{\f(r)} = h(r) + ds^2$ is non-flat.

The second family of $\SU(3)$-structures on $\T^6$ appearing in Example \ref{extorus} is defined by the 3-form $\psip(r)$ and the 2-form 
$\widehat\omega = \frac{1}{\kappa_1\kappa_2\kappa_3}\widetilde\omega$, where $\widetilde\omega$ is given by \eqref{tildeomgT6}. 
Since $(\widehat\omega,\psip(r))$ satisfies \eqref{structueqnsu3} with $\theta =  -d_6 \log\left(\kappa_1\kappa_2\kappa_3\right)$, 
by Lemma \ref{G2closedSU3corresp} we know that the 3-form
\[
\widehat\f(r)\coloneqq  \left(\kappa_1\kappa_2\kappa_3\right)\widehat\omega\W ds +\psip(r)
\]
defines a closed warped $\G_2$-structure on $\T^7=\T^6\times\Su$. Observe that the Riemannian metric induced by $\widehat\f(r)$ is
\[
g_{\widehat\f(r)} = \widehat{h}(r) +  \left(\kappa_1\kappa_2\kappa_3\right)^2 ds^2,
\]
$\widehat{h}(r)$ being the metric associated with the $\SU(3)$-structure $(\widehat\omega,\psip(r))$. 

Having provided two examples of closed $\G_2$-structures on $\T^7$, 
a challenging problem consists in studying the behaviour of the Laplacian flow \eqref{LapFlow} starting from one of them.  

\bigskip

\noindent  {\bf Acknowledgements.} The authors would like to thank Fabio Podest\`a and Luigi Vezzoni for useful conversations, and Jorge Lauret for useful comments.



\begin{thebibliography}{10}

\bibitem{BedVez}
L.~Bedulli and L.~Vezzoni.
\newblock The {R}icci tensor of {SU}(3)-manifolds.
\newblock {\em J. Geom. Phys.}, {\bf 57}(4), 1125--1146, 2007.

\bibitem{BedVez2}
L.~Bedulli and L.~Vezzoni.
\newblock A parabolic flow of balanced metrics.
\newblock  {\em J.~Reine Angew.~Math.}, {\bf 723}, 79--99, 2017.

\bibitem{Bry}
R.~L. Bryant.
\newblock Some remarks on {G$_2$}-structures.
\newblock In {\em Proceedings of {G}{\"o}kova {G}eometry-{T}opology {C}onference 2005}, pages 75--109. G{\"o}kova Geometry/Topology Conference (GGT), G{\"o}kova, 2006.
  
\bibitem{BrySal}
R.~L. Bryant and S.~M. Salamon.
\newblock On the construction of some complete metrics with exceptional holonomy.
\newblock {\em Duke Math. J.}, {\bf58}(3), 829--850, 1989.  
  
\bibitem{BryXu}
R.~L. Bryant and F.~Xu.
\newblock Laplacian flow for closed {G}$_2$-structures: Short time behavior.
\newblock \href{http://arxiv.org/abs/1101.2004}{arXiv:1101.2004}.  
 
\bibitem{ChSa}
S.~Chiossi and S.~Salamon.
\newblock The intrinsic torsion of {$\rm SU(3)$} and {G$_2$} structures.
\newblock In {\em Differential geometry, {V}alencia, 2001}, pages 115--133.
  World Sci. Publ., River Edge, NJ, 2002.  
 
\bibitem{ClSw}
R.~Cleyton and A.~Swann.
\newblock Cohomogeneity-one {G$_2$}-structures.
\newblock {\em J. Geom. Phys.}, {\bf44}(2-3), 202--220, 2002. 
 
\bibitem{CleIva}
R.~Cleyton and S.~Ivanov.
\newblock Curvature decomposition of {G$_2$}-manifolds.
\newblock {\em J. Geom. Phys.}, {\bf58}(10), 1429--1449, 2008. 
  
\bibitem{CoFe}
D.~Conti and M.~Fern{{\'a}}ndez.
\newblock Nilmanifolds with a calibrated {G}$_2$-structure.
\newblock {\em Differential Geom. Appl.}, {\bf29}(4), 493--506, 2011.

\bibitem{ConTom}
D.~Conti and A.~Tomassini.
\newblock Special symplectic six-manifolds.
\newblock {\em Q. J. Math.}, {\bf 58}(3), 297--311, 2007. 
 
\bibitem{CLSS}
V.~Cort{{\'e}}s, T.~Leistner, L.~Sch{{\"a}}fer, and F.~Schulte-Hengesbach.
\newblock Half-flat structures and special holonomy.
\newblock {\em Proc. Lond. Math. Soc. (3)}, {\bf 102}(1), 113--158, 2011. 
 
\bibitem{CHNP}
A.~Corti, M.~Haskins, J.~Nordstr{{\"o}}m, and T.~Pacini.
\newblock {G$_2$}-manifolds and associative submanifolds via semi-{F}ano 3-folds.
\newblock {\em Duke Math. J.}, {\bf164}(10), 1971--2092, 2015. 
 
\bibitem{DeBTom}
P.~de~Bartolomeis and A.~Tomassini.
\newblock On solvable generalized {C}alabi-{Y}au manifolds.
\newblock {\em Ann. Inst. Fourier (Grenoble)}, {\bf 56}(5), 1281--1296, 2006. 
 
\bibitem{EeSa}
J.~Eells and S.~Salamon.
\newblock Twistorial construction of harmonic maps of surfaces into four-manifolds.
\newblock {\em Ann. Scuola Norm. Sup. Pisa Cl. Sci. (4)}, {\bf12}(4), 589--640, 1985. 
 
\bibitem{Fer}
M.~Fern{{\'a}}ndez.
\newblock An example of a compact calibrated manifold associated with the exceptional {L}ie group {G}$_2$.
\newblock {\em J. Differential Geom.}, {\bf26}(2), 367--370, 1987. 
 
\bibitem{FFM}
M.~Fern{\'a}ndez, A.~Fino, and V.~Manero.
\newblock Laplacian flow of closed {G$_2$}-structures inducing nilsolitons.
\newblock {\em J. Geom. Anal.}, {\bf 26}(3), 1808--1837, 2016.   

\bibitem{FeGr}
M.~Fern{{\'a}}ndez and A.~Gray.
\newblock Riemannian manifolds with structure group {G}$_{2}$.
\newblock {\em Ann. Mat. Pura Appl. (4)}, {\bf132}, 19--45, 1982.

\bibitem{FMOU}
M.~Fern{\'a}ndez, V.~Manero, A.~Otal, and L.~Ugarte.
\newblock Symplectic half-flat solvmanifolds.
\newblock {\em Ann. Global Anal. Geom.}, {\bf 43}(4), 367--383, 2013.  

\bibitem{FiYa}
J.~Fine and C.~Yao.
\newblock Hypersymplectic 4-manifolds, the G$_2$-Laplacian flow and extension assuming bounded scalar curvature. 
\newblock \href{https://arxiv.org/abs/1704.07620}{arXiv:1704.07620}.

\bibitem{FrSchD}
M.~Freibert and F.~Schulte-Hengesbach.
\newblock Half-flat structures on decomposable {L}ie groups.
\newblock {\em Transform. Groups}, {\bf17}(1), 123--141, 2012.

\bibitem{Hit1}
N.~Hitchin.
\newblock The geometry of three-forms in six dimensions.
\newblock {\em J. Differential Geom.}, {\bf 55}(3), 547--576, 2000.

\bibitem{Hit}
N.~Hitchin.
\newblock Stable forms and special metrics.
\newblock In {\em Global differential geometry: the mathematical legacy of {A}lfred {G}ray ({B}ilbao, 2000)}, volume 288 of {\em Contemp. Math.}, 
pages 70--89. Amer. Math. Soc., Providence, RI, 2001.
  
\bibitem{Joy1}
D.~D. Joyce.
\newblock Compact {R}iemannian {$7$}-manifolds with holonomy {G$_2$}. {I}, {II}.
\newblock {\em J. Differential Geom.}, {\bf43}(2), 291--328, 329--375, 1996.  
  
\bibitem{Kar2}
S.~Karigiannis.
\newblock Flows of {G$_2$}-structures. {I}.
\newblock {\em Q. J. Math.}, {\bf 60}(4), 487--522, 2009.

\bibitem{KarMcTsu}
S.~Karigiannis, B.~McKay, and M.-P. Tsui. 
\newblock Soliton solutions for the {L}aplacian co-flow of some {$\G_2$}-structures with symmetry.
\newblock {\em Differential Geom. Appl.}, {\bf30}(4), 318--333, 2012.

\bibitem{Kov}
A.~Kovalev.
\newblock Twisted connected sums and special {R}iemannian holonomy.
\newblock {\em J. Reine Angew. Math.}, {\bf565}, 125--160, 2003.

\bibitem{Lau1}
J.~Lauret.
\newblock Geometric flows and their solitons on homogeneous spaces.
\newblock  \href{https://arxiv.org/abs/1507.08163}{arXiv:1507.08163}. To appear in {\em Rend.~Sem.~Mat.~Torino.}

\bibitem{Lau2}
J.~Lauret.
\newblock Laplacian flow of homogeneous {G$_2$}-structures and its solitons.
\newblock {\em Proc. Lond. Math. Soc. (3)}, {\bf114}(3), 527--560, 2017.

\bibitem{Lau3}
J.~Lauret.
\newblock Laplacian solitons: questions and homogeneous examples.
\newblock  \href{https://arxiv.org/abs/1703.01853}{arXiv:1703.01853}. To appear in {\em Diff.~Geom.~Appl.}

\bibitem{LicMed}
A.~Lichnerowicz and A.~Medina.
\newblock On {L}ie groups with left-invariant symplectic or {K}\"ahlerian structures.
\newblock {\em Lett. Math. Phys.}, {\bf16}(3), 225--235, 1988.

\bibitem{Lin}
C.~Lin.
\newblock Laplacian solitons and symmetry in {G$_2$}-geometry.
\newblock {\em J. Geom. Phys.}, {\bf64}, 111--119, 2013.

\bibitem{LotWei1}
J.~D. Lotay and Y.~Wei.
\newblock Laplacian flow for closed {G$_2$} structures: {S}hi-type estimates, uniqueness and compactness.
\newblock {\em Geom. Funct. Anal.}, {\bf27}(1), 165--233, 2017.

\bibitem{LotWei2}
J.~D. Lotay and Y.~Wei.
\newblock Stability of torsion-free G$_2$ structures along the Laplacian flow.
\newblock  \href{https://arxiv.org/abs/1504.07771}{arXiv:1504.07771}. To appear in {\em J.~Differential Geom.} 

\bibitem{LotWei3}
J.~D. Lotay and Y.~Wei.
\newblock Laplacian flow for closed G$_2$ structures: real analyticity.
\newblock  \href{https://arxiv.org/abs/1601.04258}{arXiv:1601.04258}. To appear in {\em Comm.~Anal.~Geom.} 

\bibitem{LotSes}
J.~Lott and N.~Sesum.
\newblock Ricci flow on three-dimensional manifolds with symmetry.
\newblock {\em Comment. Math. Helv.}, {\bf 89}(1), 1--32, 2014.  
  
\bibitem{ManTh}
V.~Manero.
\newblock {\em Closed {\rm G}$_2$ forms and special metrics}.
\newblock PhD thesis, Universidad del Pa\'is Vasco, 2015.  
\newblock Available at \href{https://addi.ehu.es/handle/10810/16773}{https://addi.ehu.es/handle/10810/16773}.
    
\bibitem{Mil}
J.~Milnor.
\newblock Curvatures of left invariant metrics on {L}ie groups.
\newblock {\em Advances in Math.}, {\bf21}(3), 293--329, 1976.  
  
\bibitem{Nic}
M.~Nicolini.
\newblock Laplacian solitons on nilpotent Lie groups.
\newblock  \href{https://arxiv.org/abs/1608.08599}{arXiv:1608.08599}.  
  
\bibitem{On}
B.~O'Neill.
\newblock {\em Semi-{R}iemannian geometry}, volume 103 of {\em Pure and Applied
  Mathematics}.
\newblock Academic Press, New York, 1983.  
  
\bibitem{Rei}
W.~Reichel.
\newblock {\em {\"U}ber die Trilinearen Alternierenden Formen in 6 und 7 Ver{\"a}nderlichen}.
\newblock PhD thesis, Greifswald, 1907.  
  
\bibitem{TomVez}
A.~Tomassini and L.~Vezzoni.
\newblock On symplectic half-flat manifolds.
\newblock {\em Manuscripta Math.}, {\bf125}(4), 515--530, 2008.  
  
\bibitem{Tran}
H.~Tran.
\newblock Harnack Estimates for Ricci Flow on a Warped Product.
\newblock {\em J.~Geom.~Anal.}, {\bf 26}(3), 1838--1862, 2016.  
  
\bibitem{Xu}
F.~Xu.
\newblock {\em Geometry of {\rm SU(3)} manifolds}.
\newblock PhD thesis, Duke University, 2008.
\newblock Available at \href{http://dukespace.lib.duke.edu/dspace/handle/10161/826}{http://dukespace.lib.duke.edu/dspace/handle/10161/826}.
  
  
\end{thebibliography}
\end{document}